\theoremstyle{plain}
\newtheorem{dummy}{anything}[section]
\newtheorem{theorem}[dummy]{Theorem}
\newtheorem{lemma}[dummy]{Lemma}
\newtheorem{proposition}[dummy]{Proposition}
\newtheorem{corollary}[dummy]{Corollary}
\theoremstyle{definition}%%Change Theoremstyle
\newtheorem{example}[dummy]{Example}
\newtheorem{remark}[dummy]{Remark}
\newtheorem*{acknowledgements}{Acknowledgements}
\newcommand{\Z}{\mathbb{Z}}
\newcommand{\F}{\mathbb{F}}
\newcommand{\HF}{\widehat{HF}}
\newcommand{\spin}{\mathrm{Spin}^c}
\begin{document}

\title{Contact (+1)-surgeries along Legendrian Two-component Links}

\author{Fan Ding, Youlin Li and Zhongtao Wu}

\address{School of Mathematical Sciences and LMAM, Peking University, Beijing 100871, China}
\email{dingfan@math.pku.edu.cn}

\address{School of Mathematical Sciences, Shanghai Jiao Tong University, Shanghai 200240, China}
\email{liyoulin@sjtu.edu.cn}

\address{Department of Mathematics, The Chinese University of Hong Kong, Shatin, Hong Kong}
\email{ztwu@math.cuhk.edu.hk}

%\subjclass[2010]{57R17, 57R58}

\begin{abstract}
In this paper, we study contact surgeries along Legendrian links in the standard contact 3-sphere. On one hand, we use algebraic methods to prove the vanishing of the contact Ozsv\'{a}th-Szab\'{o} invariant for contact $(+1)$-surgery along certain Legendrian two-component links. The main tool is a link surgery formula for Heegaard Floer homology developed by Manolescu and Ozsv\'{a}th. On the other hand, we use contact-geometric argument to show the overtwistedness of the contact 3-manifolds obtained by contact $(+1)$-surgeries along Legendrian two-component links whose two components are linked in some special configurations.

%\vspace{4mm}
%Mathematics Subject Classification (2010). 57R17, 57R58.

%\vspace{4mm}
%Keywords. contact surgery, contact Ozsv\'{a}th-Szab\'{o} invariant, link surgery formula, overtwisted.
\end{abstract}

%Key words: contact surgery, contact Ozsv\'{a}th-Szab\'{o} invariant, link surgery formula, overtwisted.

\maketitle

\section{Introduction}\label{sec: intro}

A \emph{contact structure} $\xi$ on a smooth oriented 3-manifold $Y$ is a smooth tangent 2-plane field $\xi$ such that any smooth 1-form $\alpha$ locally defining $\xi$ as $\xi=\ker\alpha$ satisfies the condition $\alpha \wedge d\alpha >0$. A contact structure $\xi$ is coorientable if and only if there is a global 1-form $\alpha$ with $\xi=\ker\alpha$. Throughout this paper, we will assume our 3-manifolds are oriented, connected and our contact structures are cooriented.
A contact structure $\xi$ on $Y$ is called \emph{overtwisted} if one can find an embedded disc $D$ in $Y$ such that the tangent plane field of $D$ along its boundary coincides with $\xi$; otherwise, it is called \emph{tight}.  Any closed oriented 3-manifold admits an overtwisted contact structure (cf. \cite{ge}). It is much harder to find tight contact structures on a closed oriented 3-manifold. The following question is still open: Which closed oriented 3-manifolds admit tight contact structures?

One way of obtaining new contact manifolds from the existing one is through \emph{contact surgery}.  Suppose $L$ is a \emph{Legendrian knot} in a contact 3-manifold $(Y,\xi)$, i.e., $L$ is tangent to the given contact structure $\xi$ on $Y$. Contact surgery is a version of Dehn surgery that is adapted to the contact category.  Roughly speaking, we delete a tubular neighborhood of $L$, then reglue it, and obtain a contact structure on the surgered manifold by extending $\xi$ from the complement of the tubular neighborhood of $L$ to a tight contact structure on the reglued solid torus (see \cite{dg} for details). In \cite{dg}, the first author and Geiges proved that every closed contact 3-manifold $(Y,\xi)$ can be obtained by contact $(\pm 1)$-surgery along
a Legendrian link in $(S^3, \xi_{std})$, where $\xi_{std}$ denotes the standard contact structure on $S^3$.

%Heegaard Floer theory associates an abelian group $\HF(Y,\mathfrak{t})$ to a closed, oriented Spin$^c$ 3-manifold $(Y,\mathfrak{t})$, and a homomorphism
%$$F_{W,\mathfrak{s}}:\HF(Y_1,\mathfrak{t}_1)\to \HF(Y_2,\mathfrak{t}_2)$$ to a Spin$^c$ cobordism $(W,\mathfrak{s})$ between two Spin$^c$ 3-manifolds $(Y_1,\mathfrak{t}_1)$ and $(Y_2,\mathfrak{t}_2)$. Write $\HF(Y)$ for the direct sum $\oplus_{\mathfrak{t}}\HF(Y,\mathfrak{t})$ over all Spin$^c$ structures $\mathfrak{t}$ on $Y$ and $F_W$ for the sum $\sum_{\mathfrak{s}}F_{W,\mathfrak{s}}$ over all Spin$^c$ structures $\mathfrak{s}$ on $W$. Throughout this paper, we work with Heegaard Floer homology with coefficients in $\mathbb{F}=\mathbb{Z}/2\mathbb{Z}$. In \cite{OSzContact}, Ozsv\'ath and Szab\'o introduced an invariant $c(Y, \xi)\in \HF(-Y)$ for any closed contact 3-manifold $(Y,\xi)$. We call it the \emph{contact Ozsv\'ath-Szab\'o invariant}, or simply the \emph{contact invariant} of $(Y,\xi)$.  It is shown that $c(Y, \xi)=0$ if $(Y,\xi)$ is overtwisted \cite{OSzContact}, and $c(Y, \xi)\neq 0$ if $(Y,\xi)$ is strongly symplectically fillable \cite{gh}. If the contact manifold $(Y_L, \xi_L)$ is obtained from $(Y,\xi)$ by contact $(+1)$-surgery along a Legendrian knot $L$, then we have
%\begin{equation}\label{functorial}
%F_{-W}(c(Y, \xi))=c(Y_L, \xi_L),
%\end{equation}
%where $-W$ stands for the cobordism induced by the surgery with reversed orientation.
%This functorial property of the contact invariant can be proved by an adaption of \cite[Theorem 4.2]{OSzContact} (cf. \cite[Theorem 2.3]{ls2}).

Many tools have been developed to detect tightness, including an invariant $c(Y, \xi)\in \HF(-Y)$ in Heegaard Floer theory for closed contact 3-manifolds $(Y,\xi)$. We call it the \emph{contact Ozsv\'ath-Szab\'o invariant}, or simply the \emph{contact invariant} of $(Y,\xi)$.  It is shown that $c(Y, \xi)=0$ if $(Y,\xi)$ is overtwisted \cite{OSzContact}, and $c(Y, \xi)\neq 0$ if $(Y,\xi)$ is strongly symplectically fillable \cite{gh}.

It is natural to ask whether the contact invariant of a contact 3-manifold obtained by contact surgery along a Legendrian link is trivial or not.  All known results concern contact surgeries along Legendrian knots. In \cite{ls}, Lisca and Stipsicz showed that contact $\frac{1}{n}$-surgeries along certain Legendrian knots in $(S^{3}, \xi_{std})$ yield contact 3-manifolds with nonvanishing contact invariants for any positive integer $n$. In \cite{g}, Golla considered contact 3-manifolds obtained from $(S^3,\xi_{std})$ by contact $n$-surgeries along  Legendrian knots, where $n$ is any positive integer. He gave a necessary and sufficient condition for the contact invariant of such a contact 3-manifold to be nonvanishing. In \cite{mt}, Mark and Tosun extended Golla's result to contact $r$-surgeries, where $r>0$ is rational.

To go further along this line of investigation, we study contact $(+1)$-surgeries along Legendrian two-component links in $(S^{3}, \xi_{std})$ in this paper. Here, contact $(+1)$-surgery along a Legendrian link means contact $(+1)$-surgery along each component of the Legendrian link.  One of our main results below (Theorem \ref{thm: main}) can be viewed as a first step towards our ultimate goal of obtaining a necessary and sufficient condition for contact $(+1)$-surgery on a link to yield a manifold with nonvanishing contact Ozsv\'ath-Szab\'o invariant.

%In this paper, we study contact $(+1)$-surgeries along Legendrian two-component links in $(S^{3}, \xi_{std})$. Our main result is:

\begin{theorem} \label{thm: main} Suppose $L=L_1\cup L_2$ is a Legendrian two-component link in the standard contact 3-sphere $(S^3, \xi_{std})$ whose two components have nonzero linking number.
Assume $L_2$ satisfies $\nu^+(L_2)=\nu^+(\overline{L_2})=0$, where $\overline{L_2}$ denotes the mirror of $L_2$. Then contact $(+1)$-surgery on $(S^3, \xi_{std})$ along $L$ yields a contact 3-manifold with vanishing contact invariant.
\end{theorem}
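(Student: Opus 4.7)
The plan is to apply the link surgery formula of Manolescu and Ozsv\'ath to reduce the theorem to an algebraic computation in the knot Floer complex of $L_2$. Topologically, contact $(+1)$-surgery along $L$ corresponds to integral surgery with framing matrix
\[
\Lambda = \begin{pmatrix} \mathrm{tb}(L_1)+1 & \ell \\ \ell & \mathrm{tb}(L_2)+1 \end{pmatrix},
\]
where $\ell = \operatorname{lk}(L_1, L_2) \neq 0$. The Manolescu-Ozsv\'ath formula presents the Heegaard Floer homology of the surgered manifold as the homology of a hypercube-shaped chain complex assembled from various $A^-$-type complexes associated to sublinks of $L$, with edge maps given by inclusions of subcomplexes shifted according to $\Lambda$. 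Following Ozsv\'ath-Szab\'o and its extensions by Lisca-Stipsicz, Golla, and Mark-Tosun, the contact invariant is represented in this complex by a distinguished cycle $c$ coming from the top generator of the knot Floer homology of $L$, propagated through the contact $(+1)$-surgery cobordism maps.

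First I would write down the Manolescu-Ozsv\'ath hypercube explicitly for $L$ and pin down the cycle $c$, by tracking the contact class through the cobordism maps associated to the two $2$-handle attachments. Second I would exploit the hypothesis $\ell \neq 0$: the off-diagonal entries of $\Lambda$ cause the Spin$^c$-structure parameters for the $L_2$-direction edges to sweep across all integers as we sum over the $L_1$-direction edges, permitting a reduction of $c$ to an expression that depends only on the $A^-$-complexes of $L_2$. Third I would apply $\nu^+(L_2) = \nu^+(\overline{L_2}) = 0$, which tightly constrains the structure of those complexes: the first condition guarantees that the natural inclusion $A^-_0(L_2) \to CF^-(S^3)$ is surjective onto the $U$-nontorsion tower, while the second provides the dual statement controlling the destabilization map on the $\overline{L_2}$ side. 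Combining these two facts with the $L_1$-direction edge maps, I expect to produce an explicit primitive $b$ with $\partial b = c$, which gives the desired vanishing $c(Y,\xi) = 0$.

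The main obstacle I anticipate is the careful identification of the contact invariant as a cycle in the Manolescu-Ozsv\'ath hypercube. For a single Legendrian knot this identification has been carried out by Golla and by Mark-Tosun, but extending it to a two-component Legendrian link requires pinning down the correct pair of Spin$^c$ structures and verifying that the contact cobordism map description remains compatible with the edge maps in both hypercube directions simultaneously. A secondary difficulty is ensuring that the primitive $b$ produced from the $\nu^+$ hypotheses is globally consistent across the hypercube, i.e., respects the filtration and the Spin$^c$-shifts introduced by $\Lambda$, rather than merely providing a local boundary at one vertex.
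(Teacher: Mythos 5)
Your overall strategy coincides with the paper's: run the Manolescu--Ozsv\'ath link surgery formula, and do a diagram chase on the $E_1$ page that exhibits the contact class as a boundary, using the nonzero linking number to telescope in one lattice direction and the $\nu^+$ hypotheses to make $L_2$ behave like the unknot. However, the step you single out as the main obstacle --- identifying the contact invariant as a cycle in the hypercube --- is not an obstacle, and your description of it is off: $c(S^3,\xi_{std})$ is simply the generator of $\HF(S^3)\cong\F$, and by \cite[Theorem 14.3]{mo} the map $F_{-W}$ is induced by the inclusion of $\prod_{s}\hat{\mathfrak{A}}(\mathcal{H}^{\emptyset},\psi^{\overline{L_1}\cup\overline{L_2}}(s))$ into the surgery complex; no ``top generator of the knot Floer homology of $L$'' enters. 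The whole theorem therefore reduces to showing that each generator $c_{s_1,s_2}$ at an $\emptyset$-vertex is a boundary on the $E_1$ page.

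The genuine gap is that your plan never reduces to the case $tb(L_2)=-1$, yet the chase you outline only closes up when the topological surgery coefficient on $L_2$ is $0$: only then does the long edge map for $\overline{L_2}$ shift the lattice point by $(\pm l,0)$, so that the telescoping sum stays in the single critical row $s_2=-\frac{l}{2}$ and can be terminated by the $\overline{L_1}$-edge at $s_1\gg 0$. For arbitrary $tb(L_2)$ the $\overline{L_2}$-long map also shifts $s_2$, and the local primitives fail to assemble --- precisely the ``global consistency'' issue you worry about, but which your plan does not resolve. The paper dispatches this before the chase begins: $\nu^+(L_2)=\nu^+(\overline{L_2})=0$ forces $\tau(L_2)=0$, the bound $tb(L_2)+|rot(L_2)|\le 2\tau(L_2)-1$ of \cite{Olga} gives $tb(L_2)\le -1$, and if $tb(L_2)<-1$ then condition (\ref{Golla1}) fails for $L_2$, so Golla's criterion together with functoriality (\ref{functorial}) already kills the contact invariant. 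Relatedly, your appeal to $\nu^+$ via surjectivity of $A_0^-(L_2)\to CF^-(S^3)$ onto the tower is not quite what the hat-flavored chase needs; the effective input is the filtered splitting $CFK^\infty(L_2)\simeq CFK^\infty(U)\oplus A$ with $A$ acyclic, which supplies in each relevant $\spin$ structure a class $b^2_{s_1,s_2}$ on which the short and long edge maps behave exactly as for the unknot.
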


The main tool for proving this theorem is a link surgery formula for the Heegaard Floer homology of integral surgeries on links developed by Manolescu and Ozsv\'ath \cite{mo}.
Here, $\nu^+$ is a numerical invariant defined by Hom and the third author in \cite{HW} based on work of Rasmussen \cite{Ras}.  It is shown in \cite[Proposition 3.11]{h} that a knot $K$ satisfies the condition $\nu^+(K)=\nu^+(\overline{K})=0$ if and only if we have a filtered chain homotopy equivalence
\begin{equation} \label{equivalence}
CFK^\infty(K) \simeq CFK^\infty (U) \oplus A
\end{equation}
where $U$ denotes the unknot and $A$ is acyclic, i.e., $H_\ast(A)=0$.  In particular, such a knot must have $\tau(K)=0$.  Applying (\ref{equivalence}) enables us to treat $K$ effectively like the unknot in the proof of Theorem~\ref{thm: main}. %In particular, $\tau(K)=0$.

%\bigskip
Below, we list some interesting families of knots that satisfy the condition $$\nu^+(K)=\nu^+(\overline{K})=0$$ of Theorem \ref{thm: main}.

\begin{example} \label{Example1}
\begin{enumerate}

\item The most basic examples are the \emph{slice knots}.

We are particularly interested in Legendrian slice knots with Thurston-Bennequin invariant $-1$, as contact $(+1)$-surgeries along these knots result in contact 3-manifolds with nonvanishing contact invariants \cite{g}. Nontrivial knot types of smoothly slice knots with at most 10 crossings that have Legendrian representatives with Thurston-Bennequin invariant $-1$ are $\overline{9_{46}}$ (the mirror of $9_{46}$) and $\overline{10_{140}}$ \cite{cns}. In Figures~\ref{fig: l1} and \ref{fig: l2} below, we show a couple of Legendrian two-component links in $(S^3,\xi_{std})$ that include a Lengendrian unknot and a Legendrian knot of type $\overline{9_{46}}$, respectively.  Note that one obtains nonvanishing contact invariant after performing contact $(+1)$-surgery along each knot component of the depicted links.  On the other hand, Theorem~\ref{thm: main} implies that contact $(+1)$-surgeries along these links result in contact 3-manifolds with vanishing contact invariants.

\begin{figure}[htb]
\begin{overpic}%[grid,tics=10]
{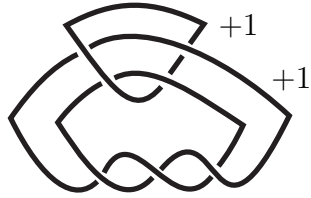}
\put(80, 60){$+1$}
\put(100, 40){$+1$}
%\put(40, 90){$+1$}
%\put(290, 110){$+1$}
%\put(25, 110){$L_{2}$}
%\put(0, 65){$L_{1}$}

\end{overpic}
\caption{The upper component is a Legendrian unknot with Thurston-Bennequin invariant $-1$. }
\label{fig: l1}
\end{figure}

\begin{figure}[htb]
\begin{overpic}%[grid,tics=10]
{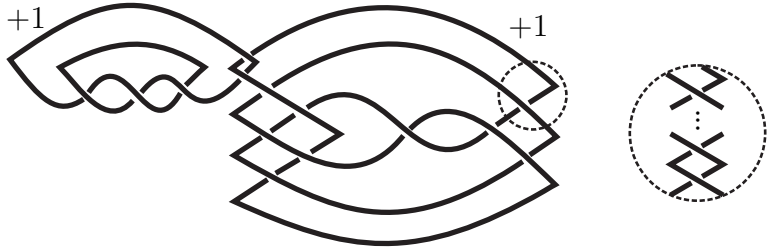}
\put(0, 83){$+1$}
\put(190, 80){$+1$}
\put(260, 42){$\cdot$}
\put(260, 48){$\cdot$}
\put(260, 45){$\cdot$}
%\put(290, 110){$+1$}
%\put(25, 110){$L_{2}$}
%\put(0, 65){$L_{1}$}

\end{overpic}
\caption{The right component of the link is a Legendrian knot of type $\overline{9_{46}}$ with Thurston-Bennequin invariant $-1$ and rotation number $0$. The right figure is a Legendrian tangle. }
\label{fig: l2}
\end{figure}

%A front projection of Legendrian $\overline{9_{46}}$ with Thurston-Bennequin invariant $-1$ is depicted in Figure~\ref{fig: l2}.
As pointed out by the referee, if we replace the dashed circled area of the front projection diagram of the Legendrian $\overline{9_{46}}$ by the right tangles in Figure~\ref{fig: l2}, then we can obtain infinitely many prime Legendrian slice knots with Thurston-Bennequin invariant $-1$.  Alternatively, as $tb(L_{1}\sharp L_{2})=tb(L_{1})+tb(L_{2})+1$, where $L_{1}\sharp L_{2}$ denotes the Legendrian connected sum \cite{eh}, we can obtain infinitely many composite Legendrian slice knots with Thurston-Bennequin invariant $-1$.  In either case, we can create infinitely  families of examples similar to Figures~\ref{fig: l1} and \ref{fig: l2} using those Legendrian slice knots.

 %It will be shown in Examples~\ref{Example4} and \ref{Example5} that contact $(+1)$-surgeries along Legendrian two-component links in Figures~\ref{fig: l1} and\ref{fig: l2} yield overtwisted contact 3-manifolds.

\item More generally, all \emph{rationally slice knots} satisfy $\nu^+(K)=\nu^+(\overline{K})=0$ \cite[Theorem 1.4]{KW}.

Recall that a knot $K\subset S^3$ is rationally slice if there exists an embedded disk $D$ in a rational homology $4$-ball $V$ such that $\partial (V,D)=(S^3, K)$.  Examples of rationally slice knots include \emph{strongly $-$ amphicheiral knots} and \emph{Miyazaki knots}, i.e., fibered, $-$ amphicheiral knots with irreducible Alexander polynomial \cite{KW}.  In particular, the figure-eight knot is rationally slice but not slice.

 \end{enumerate}

\end{example}

\begin{example}\label{Example2}
Let $L=L_{1}\cup L_{2}$ be a Legendrian link in $(S^{3}, \xi_{std})$. Suppose $L_{2}$ is a Legendrian unknot with $tb(L_{2})=-1$, and $L_2$ is a meridional curve of $L_1$. Then, by Theorem~\ref{thm: main}, contact $(+1)$-surgery along $L$ yields a contact structure $\xi_L$ on $S^3$ with vanishing contact invariant. Hence by the classification of tight contact structures on $S^3$ \cite{e0} and $c(S^3,\xi_{std})\neq 0$, the contact 3-manifold $(S^3,\xi_L)$ is overtwisted.
\end{example}

In the special case of Theorem \ref{thm: main} where $L_2$ is a Legendrian unknot with $tb(L_2)=-1$,  contact $(+1)$-surgery on $(S^3, \xi_{std})$ along $L_2$ yields the unique (up to isotopy) tight contact structure $\xi_{t}$ on $S^{1}\times S^{2}$.  Hence, in this case, we may interpret the theorem as a result of contact $(+1)$-surgery along a Legendrian knot in $(S^{1}\times S^{2}, \xi_{t})$.  More generally, we have the following corollary.

\begin{corollary} \label{cor} Suppose $L$ is a Legendrian knot in $\sharp^{k}(S^{1}\times S^{2}, \xi_{t})$, the contact connected sum of $k$ copies of  $(S^{1}\times S^{2},\xi_{t})$. If $L$ is not null-homologous, then contact $\frac{1}{n}$-surgery on $\sharp^{k}(S^{1}\times S^{2}, \xi_{t})$ along $L$ yields a contact 3-manifold with vanishing contact invariant for any positive integer $n$.
\end{corollary}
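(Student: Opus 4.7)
The plan is to reduce Corollary~\ref{cor} to a single application of Theorem~\ref{thm: main} combined with the well-known fact that contact $(+1)$-surgery preserves the vanishing of the contact Ozsv\'ath-Szab\'o invariant.

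First, I would rewrite all surgeries as contact $(+1)$-surgeries in $(S^3, \xi_{std})$. Recall from \cite{dg} that $\sharp^k(S^1 \times S^2, \xi_t)$ is obtained from $(S^3, \xi_{std})$ by contact $(+1)$-surgery along an unlink of $k$ standard Legendrian unknots $U_1, \ldots, U_k$ with $tb(U_i) = -1$, and that contact $\frac{1}{n}$-surgery along a Legendrian knot $L$ is equivalent to contact $(+1)$-surgery along $n$ Legendrian push-offs $L^{(1)}, \ldots, L^{(n)}$ of $L$. Pulling $L$ back to a Legendrian knot in $(S^3, \xi_{std})$ disjoint from the $U_i$'s, the contact 3-manifold produced by the corollary's surgery coincides with the result of contact $(+1)$-surgery on the Legendrian link
\[
\mathcal{L} := L^{(1)} \cup \cdots \cup L^{(n)} \cup U_1 \cup \cdots \cup U_k \subset (S^3, \xi_{std}).
\]
Since $H_1(\sharp^k(S^1 \times S^2); \Z) \cong \Z^k$ is generated by the meridians of the $U_i$'s and the class of $L$ in this group is read off as $(lk(L,U_1), \ldots, lk(L,U_k))$, the non-null-homologous hypothesis gives, after relabeling, $lk(L, U_1) \neq 0$.

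Next, I would extract the two-component sublink $L^{(1)} \cup U_1 \subset \mathcal{L}$. Its components have linking number $lk(L^{(1)}, U_1) = lk(L, U_1) \neq 0$, and the Legendrian unknot $U_1$ is smoothly slice, so it satisfies $\nu^+(U_1) = \nu^+(\overline{U_1}) = 0$. Applying Theorem~\ref{thm: main} to this sublink (with $L_1 = L^{(1)}$ and $L_2 = U_1$) produces a contact 3-manifold $(Y', \xi')$ with $c(Y', \xi') = 0$.

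Finally, I would perform the remaining contact $(+1)$-surgeries along the images in $(Y', \xi')$ of $L^{(2)}, \ldots, L^{(n)}, U_2, \ldots, U_k$ one component at a time, invoking at each step the standard fact that if $(Y_1, \xi_1)$ is obtained from $(Y_0, \xi_0)$ by contact $(+1)$-surgery along a Legendrian knot and $c(Y_0, \xi_0) = 0$, then $c(Y_1, \xi_1) = 0$. This is an immediate consequence of the Ozsv\'ath-Szab\'o functoriality of the contact invariant under the \emph{dual} contact $(-1)$-surgery cobordism from $(Y_1, \xi_1)$ back to $(Y_0, \xi_0)$, which carries $c(\xi_0)$ to $c(\xi_1)$, so that $c(\xi_0) = 0$ forces $c(\xi_1) = 0$. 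Iterating through the remaining components yields the vanishing of the contact invariant of the final manifold. The heart of the argument lies in Theorem~\ref{thm: main}; the main potential pitfall is the direction of the cobordism map, which must be handled via the dual $(-1)$-surgery rather than the $(+1)$-surgery cobordism itself.
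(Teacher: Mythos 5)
Your proposal is correct and follows essentially the same route as the paper: convert everything to a contact $(+1)$-surgery on $(S^3,\xi_{std})$ along the $(k+n)$-component link formed by the unlink and the push-offs, identify a two-component sublink consisting of one push-off and one unknot component with nonzero linking number (guaranteed by the non-null-homologous hypothesis), apply Theorem~\ref{thm: main} to it, and propagate the vanishing through the remaining $(+1)$-surgeries via the functoriality property~(\ref{functorial}). The only cosmetic difference is that the paper phrases the last step as a single appeal to~(\ref{functorial}) rather than an explicit one-component-at-a-time iteration.
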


We also consider contact $(+1)$-surgeries along Legendrian two-component links in $(S^3,\xi_{std})$ whose two components have zero linking number. Unlike the nonzero linking number case, there are numerous instances for which the contact invariant is nonvanishing even if one of the components of the surgered link is the unknot.  For example, $(+1)$-surgery along the Legendrian two-component unlink with Thurston-Bennequin invariant $-1$ for both components yields $\sharp^{2}(S^{1}\times S^{2}, \xi_{t})$. Another more interesting example comes from contact $(+1)$-surgery along the Legendrian two-component link as depicted in Figure~\ref{fig: lk4}.  The resulting contact 3-manifold has nonvanishing contact invariant by \cite[Exercise 12.2.8(c)]{ozst}. %(One should be careful that there is a wrong crossing in \cite[Figure 12.4]{ozst}.)

%\begin{example} \label{Example3}
%\begin{enumerate}

%\item Contact $(+1)$-surgery on $(S^3, \xi_{std})$ along the Legendrian two-component unlink whose two components both have Thurston-Bennequin invariant $-1$ yields the contact 3-manifold $\sharp^{2}(S^{1}\times S^{2}, \xi_{t})$ with nonvanishing contact invariant.

%\item \cite[Exercise 12.2.8(c)]{ozst} provides examples of Legendrian two-component links in $(S^3, \xi_{std})$ by removing one of the two Legendrian unknot components in \cite[Figure 12.4]{ozst} along which contact $(+1)$-surgeries yield  contact 3-manifolds with nonvanishing contact invariants. One of the resulting contact surgery diagrams can be transformed into Figure~\ref{fig: lk4} by Legendrian Reidemeister moves. One should be careful that there is a wrong crossing in \cite[Figure 12.4]{ozst}.

%\item  Contact $(+1)$-surgery on $(S^3, \xi_{std})$ along the Legendrian two-component link in $(S^3, \xi_{std})$ depicted in Figure~\ref{fig: lk4} yields a contact 3-manifold with nonvanishing contact invariant. This follows from \cite[Exercise 12.2.8(c)]{ozst}. (One should be careful that there is a wrong crossing in \cite[Figure 12.4]{ozst}.)

% \end{enumerate}
%\end{example}

\begin{figure}[htb]
\begin{overpic}%[grid,tics=10]
{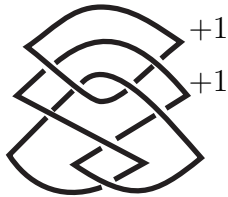}
\put(70, 60){$+1$}
\put(70, 40){$+1$}
%\put(40, 90){$+1$}
%\put(290, 110){$+1$}
%\put(25, 110){$L_{2}$}
%\put(0, 65){$L_{1}$}

\end{overpic}
\caption{The lower component is a Legendrian right handed trefoil with Thurston-Bennequin invariant $1$ and rotation number $0$.  }
\label{fig: lk4}
\end{figure}

Thus, we expect that analogous results to Theorem \ref{thm: main} for the linking number 0 case are likely more difficult to obtain.  Instead, we will first study some special cases in our paper, namely contact $(+1)$-surgeries along Legendrian Whitehead links.  To the best of our knowledge, the contact invariants or tightness of such manifolds have not been explicitly given in the literature.

\begin{proposition}\label{PropWhitehead}
Contact $(+1)$-surgery on $(S^3, \xi_{std})$ along a Legendrian Whitehead link yields a contact 3-manifold with vanishing contact invariant.
\end{proposition}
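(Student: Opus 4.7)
The plan is to adapt the Manolescu--Ozsv\'ath link surgery argument used to prove Theorem~\ref{thm: main} to handle the zero linking number case of the Whitehead link. For a Legendrian Whitehead link $L=L_1\cup L_2$ in $(S^3,\xi_{std})$, contact $(+1)$-surgery is topologically Dehn surgery along the smooth Whitehead link with integer framings $n_i=tb(L_i)+1$, so its Heegaard Floer homology may be computed by a surgery hypercube built from the link Floer complex $CFL^\infty$ of the Whitehead link.

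My proof plan has three main steps. First, I would compute (or import from existing work) the two-variable link Floer complex of the Whitehead link together with its two filtrations; this is the essential input for the Manolescu--Ozsv\'ath formula, and it is tractable because the Whitehead link is a small 2-bridge link. Second, I would assemble the surgery hypercube for the framings $(n_1,n_2)$ and locate the contact class $c(S^3(L),\xi_L)$ inside the resulting complex, using the description of the contact element for contact $(+1)$-surgery given by Mark--Tosun~\cite{mt} and its natural extension from knots to two-component links. Third, I would exhibit an explicit chain whose boundary is the contact element, proving $c=0$.

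The main obstacle I anticipate is precisely the vanishing linking number. In the proof of Theorem~\ref{thm: main}, the nonzero linking number produces a crucial off-diagonal term in the surgery hypercube that quickly forces the contact class to be a boundary via an essentially algebraic argument; when the linking number is zero this mechanism disappears. The compensating structure is the Whitehead link's nontrivial Milnor $\bar\mu$-invariant, which is reflected in the differentials of $CFL^\infty$ and should supply the required null-homology, but verifying this calls for a direct computation rather than the clean shortcut available in Theorem~\ref{thm: main}. A secondary subtlety is the dependence of the contact element on the rotation numbers of $L_1$ and $L_2$, which must be handled uniformly across the various Legendrian representatives in order to cover the proposition as stated; I would organize the computation so that the null-homology constructed for the ``maximal'' Legendrian Whitehead link propagates to stabilized representatives via the naturality of the surgery hypercube under Legendrian stabilization.
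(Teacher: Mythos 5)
Your proposal correctly identifies the central difficulty---that the diagram-chasing mechanism of Theorem~\ref{thm: main} collapses when the linking number vanishes---but it stops exactly where the proof would have to begin. For the type $Wh$ with both $tb(L_i)=-1$ the surgery framing matrix is $\mathbf{0}$, so the long edge maps shift the lattice point by $(l,p_2)=(0,0)$: the short and long edge maps out of the lower-right corner of the square at $(s_1,0)$ both land on the \emph{same} generator $c_{s_1,0}$ and cancel over $\F$, giving $\partial_1 b^2_{s_1,0}=0$, and the same happens at $s_1=0$ for the maps out of the upper-left corner since both components of the Whitehead link are unknots. Thus $c_{0,0}$ is not visibly a boundary on the $E_1$ page, and one is forced to compute the diagonal chain homotopies entering $\partial_2$. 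You defer this to ``a direct computation'' and to a hoped-for contribution of the Milnor invariant to the differentials, but no such computation is exhibited and there is no precise statement connecting $\bar\mu$ to the hyperbox differentials available to cite. Likewise, the description of the contact element in Mark--Tosun~\cite{mt} is a statement about knots, and its ``natural extension'' to two-component links is not established; the available tool is the functoriality (\ref{ContactInvMap}) together with \cite[Theorem 14.3]{mo}, as in the proof of Theorem~\ref{thm: main}. Finally, ``naturality of the surgery hypercube under Legendrian stabilization'' is not an existing mechanism: stabilized representatives are handled by observing that they violate one of Golla's conditions (\ref{Golla1})--(\ref{Golla3}), and the mirror type $\overline{Wh}$ needs a separate argument since its Legendrian representatives satisfy $tb(L_1)+tb(L_2)\le -5$ by \cite[Section 5.6]{e}.

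The paper avoids all of this with a short grading argument. For type $Wh$ with both $tb=-1$, contact $(+1)$-surgery on $L_2$ produces $(S^1\times S^2,\xi_t)$, whose contact class lies in degree $-\tfrac{1}{2}$; the two-handle cobordism map $F_1$ shifts degree by $-\tfrac{1}{2}$ because $L_1$ is null-homologous there; and by Liu's computation \cite[Proposition 6.9]{l1} the group $\HF(-S^3_{\mathbf{0}}(L))$ is supported only in degrees $0$ and $1$, so the image of the contact class, which would have to live in degree $-1$, vanishes. For $\overline{Wh}$, one component is a Legendrian unknot with $tb<-1$, and overtwistedness follows from \cite{oz} and \cite{w}. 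To salvage your approach you would need either to push Liu's two-bridge-link computation far enough to identify the image of the inclusion of the upper-right corners in $E_\infty$, or to find an indirect obstruction of the kind the paper uses.
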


In light of the above vanishing results in Theorem \ref{thm: main} and Proposition \ref{PropWhitehead}, one may wonder whether the contact manifolds studied there are tight or not. As far as we know, this question is wide open even for contact $(+1)$-surgeries along Legendrian knots. Two sufficient conditions for contact $(+1)$-surgeries along Legendrian knots to be overtwisted were given by \"{O}zba\u{g}ci in \cite{oz} and by Lisca and Stipsicz in \cite[Theorem 1.1]{ls3}.   We come up with the following sufficient condition for contact $(+1)$-surgeries along Legendrian two-component links yielding overtwisted contact 3-manifolds.  Note that this theorem is irrelevant to the linking number of the two components of the Legendrian link along which we perform contact $(+1)$-surgery. It is inspired by the work of Baker and Onaran \cite[Proposition 4.1.10]{bo}.

\begin{theorem}\label{overtwisted}
Suppose there exists a front projection of  a Legendrian two-component link $L=L_1\cup L_2$ in the standard contact 3-sphere $(S^3, \xi_{std})$ that contains one of the configurations exhibited in Figure~\ref{fig: ot3}, then contact $(+1)$-surgery on $(S^3, \xi_{std})$ along $L$ yields an overtwisted contact 3-manifold.
\end{theorem}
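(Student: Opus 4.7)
The plan is to construct, for each configuration appearing in Figure~\ref{fig: ot3}, an auxiliary Legendrian unknot $U$ in $(S^3, \xi_{std})$ which survives the contact $(+1)$-surgeries on $L_1$ and $L_2$ and, when viewed in the surgered contact manifold, bounds an embedded disk with Thurston-Bennequin invariant $0$. Since a Legendrian unknot bounding a disk with $tb = 0$ is the boundary of an overtwisted disk, this immediately forces the resulting contact 3-manifold to be overtwisted. This is exactly the local-to-global strategy used by Baker and Onaran in \cite[Proposition 4.1.10]{bo}.

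First, I would analyze each of the configurations in Figure~\ref{fig: ot3} and identify a small Legendrian unknot $U$ whose front projection lies entirely inside the depicted local region and interacts with the arcs of $L_1$ and $L_2$ in a controlled way. The configurations are specifically chosen so that one can read off the Thurston-Bennequin number $tb(U)$ in $(S^3,\xi_{std})$ directly from the picture (via the usual formula $tb = \tr{writhe} - \tfrac{1}{2}\#\tr{cusps}$) together with the linking numbers $\ell k(U, L_1)$ and $\ell k(U, L_2)$. Next, I would apply the framing-change rule for contact $(+1)$-surgery: in smooth Kirby-calculus terms, contact $(+1)$-surgery on $L_i$ is a topological surgery with coefficient $tb(L_i)+1$ relative to the Seifert framing, so the contact framing of a disjoint Legendrian knot $U$ is shifted by an amount determined by the linking numbers $\ell k(U, L_i)$. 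The configurations in Figure~\ref{fig: ot3} are arranged precisely so that the cumulative shift raises $tb(U)$ from its (necessarily negative) value in $(S^3, \xi_{std})$ up to $0$ once both contact $(+1)$-surgeries have been performed.

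To finish, I would check that the disk that $U$ bounds in $S^3$ can be modified (by tubing along cocore disks of the surgery 2-handles as needed) into an embedded disk in the surgered manifold whose boundary is still $U$ as a Legendrian unknot; this uses the fact that the contributions from the intersections with $L_1$ and $L_2$ are exactly compensated by the $(+1)$-surgery. The standard Bennequin inequality (tightness $\Rightarrow tb \leq -1$ for Legendrian unknots bounding an embedded disk) then yields overtwistedness. I expect the main obstacle to be in the bookkeeping of Step 2: one must verify, carefully and case-by-case for the configurations in Figure~\ref{fig: ot3}, that the combination of local writhe, cusps, and the prescribed linking pattern with $L_1 \cup L_2$ produces exactly the increment needed to reach $tb(U) = 0$, and that $U$ really does bound an embedded disk in the surgered manifold. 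Once the right $U$ and disk are in hand for each configuration, the conclusion of overtwistedness is immediate from classical facts about Legendrian unknots.
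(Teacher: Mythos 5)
Your high-level strategy---produce a Legendrian curve that, after the two contact $(+1)$-surgeries, bounds an embedded disk whose induced framing agrees with the contact framing, and then invoke the standard fact that such a disk is (isotopic to) an overtwisted disk---is exactly the strategy of the paper, and the final step of your argument is sound. However, the mechanism you propose for producing the disk has a genuine gap. You take an auxiliary Legendrian unknot $U$ disjoint from $L$, bounding a disk in $S^3$ that meets $L_1$ and $L_2$, and claim this disk can be turned into an embedded disk in the surgered $3$-manifold by ``tubing along cocore disks of the surgery 2-handles.'' This does not work as stated: the cocore disks live in the $4$-dimensional surgery cobordism, not in the surgered $3$-manifold, and if you instead delete the intersections of your disk with the surgery solid tori, the resulting boundary circles are \emph{meridians} of $L_i$, which do not bound disks in the reglued solid tori for the surgery coefficients $tb(L_i)+1$ (they would only do so for the trivial, $\infty$-framed regluing). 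Framing bookkeeping with $lk(U,L_i)$ can tell you what the disk framing would be \emph{if} $U$ bounded a disk after surgery, but it cannot certify that such an embedded disk exists; that existence is the entire content of the theorem and is where the local configurations of Figure~\ref{fig: ot3} must actually be used.

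The paper's construction supplies the missing disk differently: from the configuration it builds a Legendrian knot $L'$ (a banded push-off of $L_1$ and $L_2$, not a curve disjoint from $L$, and not in general an unknot in $S^3$) which cobounds an embedded thrice-punctured sphere $S$ with $L_1\cup L_2$. The key computation (Lemma~\ref{SurfaceFraming}) is that the framing induced by $S$ on each $L_i$ is exactly the surgery framing $tb(L_i)+1$, so after surgery the two boundary components $L_1,L_2$ of $S$ cap off with meridian disks of the new solid tori, turning $S$ into an embedded disk $D$ with $\partial D=L'$; one then checks $tb(L')=tb(L_1)+tb(L_2)+2(l+1)$ equals the framing induced by $S$ (hence by $D$), so $D$ is an overtwisted disk. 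To repair your argument you would need to replace your unknot-plus-tubing step with such a cobounding surface whose induced framings on $L_1$ and $L_2$ match the surgery coefficients, and you would still need to exhibit this surface case-by-case for the four configurations---a construction your proposal defers entirely.
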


\begin{figure}[htb]
\begin{overpic}%[grid,tics=10]
{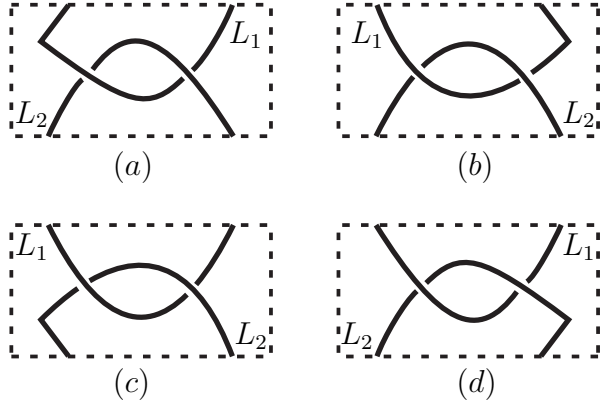}

\put(84, 120){$L_{1}$}
\put(3, 90){$L_{2}$}
\put(40, 70){$(a)$}

\put(130, 120){$L_{1}$}
\put(210, 90){$L_{2}$}
\put(170, 70){$(b)$}

%\put(120, 85){$+1$}
%\put(120, 10){$+1$}

\put(3, 40){$L_{1}$}
\put(86, 6){$L_{2}$}
\put(40, -12){$(c)$}

\put(210, 40){$L_{1}$}
\put(126, 6){$L_{2}$}
\put(170, -12){$(d)$}

\end{overpic}
\vspace{3mm}
\caption{Four configurations in a front projection of a Legendrian two-component link $L$. }
\label{fig: ot3}
\end{figure}

As an application, we will show in Examples~\ref{Example4} and \ref{Example5} that contact $(+1)$-surgeries along the Legendrian links in Figures~\ref{fig: l1} and \ref{fig: l2} actually yield overtwisted contact 3-manifolds.
%\begin{example} \label{Example4}
%Contact $(+1)$-surgeries along the Legendrian link in Figures~\ref{fig: l1} and~\ref{fig: l2} yield overtwisted contact 3-manifolds.
%\end{example}

The remainder of this paper is organized as follows.  In Section 2, we review basic properties of the contact invariant.  We also reformulate the statement of Golla concerning the conditions under which contact $(+1)$-surgery along a Legendrian knot yields a contact 3-manifold with nonvanishing contact invariant. In Section 3, we go through the construction of the link surgery formula of Manolescu and Ozsv\'ath in the special case of two-components links. We elaborate on the $E_1$ page of an associated spectral sequence and identify the relevant maps in the differential $\partial_1$ with the well-known $\hat{v}$ and $\hat{h}$ in the knot surgery formula of Ozsv\'ath and Szab\'o \cite{OSzSurgery}. In Section 4, we analyze the $E_1$ page and give a proof of Theorem \ref{thm: main} based on diagram chasing.  This idea is partly inspired by the work of Mark-Tosun \cite{mt} and Hom-Lidman \cite{HL}, and also constitutes the most novel part of our paper.
Due to some technical issues, the above argument does not apply to the linking number 0 case, so in Section 5, we use a different machinery in Heegaard Floer homology, namely the grading, to prove Proposition \ref{PropWhitehead}.  Finally, we prove Theorem~\ref{overtwisted} in Section 6.  Applying Legendrian Reidemeister moves, we obtain more examples of overtwisted contact $(+1)$-surgery in Corollary~\ref{overtwisted1}.

\medskip
\begin{acknowledgements} The authors would like to thank Ciprian Manolescu, Yajing Liu, Tye Lidman, Jen Hom, Faramarz Vafaee,  Eugene Gorsky and John Etnyre for helpful discussions and suggestions.
Part of this work was done while the second author was visiting The Chinese University of Hong Kong, and he would like to thank for its generous hospitality and support.  The first author was partially supported by Grant No.\ 11371033 of the National Natural Science Foundation of China. The second author was partially supported by Grant No.\ 11471212 and No.\ 11871332 of the National Natural Science Foundation of China. The third author was partially supported by grants from the Research Grants Council of the Hong Kong Special Administrative Region, China (Project No.\ 14300018). Last but not least, we would like to thank the referees for valuable comments and suggestions.
\end{acknowledgements}

\section{Preliminaries on contact invariants}

We briefly review backgrounds in Heegaard Floer homology and the contact invariant in this section.  Throughout this paper, we work with Heegaard Floer homology with coefficients in $\mathbb{F}=\mathbb{Z}/2\mathbb{Z}$.  Heegaard Floer theory associates an abelian group $\HF(Y,\mathfrak{t})$ to a closed, oriented Spin$^c$ 3-manifold $(Y,\mathfrak{t})$, and a homomorphism
$$F_{W,\mathfrak{s}}:\HF(Y_1,\mathfrak{t}_1)\to \HF(Y_2,\mathfrak{t}_2)$$ to a Spin$^c$ cobordism $(W,\mathfrak{s})$ between two Spin$^c$ 3-manifolds $(Y_1,\mathfrak{t}_1)$ and $(Y_2,\mathfrak{t}_2)$.  Write $\HF(Y)$ for the direct sum $\oplus_{\mathfrak{t}}\HF(Y,\mathfrak{t})$ over all Spin$^c$ structures $\mathfrak{t}$ on $Y$ and $F_W$ for the sum $\sum_{\mathfrak{s}}F_{W,\mathfrak{s}}$ over all Spin$^c$ structures $\mathfrak{s}$ on $W$.  In \cite{OSzContact}, Ozsv\'ath and Szab\'o introduced an invariant $c(Y, \xi)\in \HF(-Y)$ for closed contact 3-manifold $(Y,\xi)$.  If the contact manifold $(Y_K, \xi_K)$ is obtained from $(Y,\xi)$ by contact $(+1)$-surgery along a Legendrian knot $K$, then we have
\begin{equation}\label{functorial}
F_{-W}(c(Y, \xi))=c(Y_K, \xi_K),
\end{equation}
where $-W$ stands for the cobordism induced by the surgery with reversed orientation.
This functorial property of the contact invariant can be proved by an adaption of \cite[Theorem 4.2]{OSzContact} (cf. \cite[Theorem 2.3]{ls2}).

Next, suppose $L=L_1\cup L_2$ is an oriented Legendrian two-component link in $(S^3,\xi_{std})$, and the linking number of $L_1$ and $L_2$ is $l$. The resulting contact $(+1)$-surgery along $L$ is denoted by $(S^3_{\Lambda}(L),\xi_L)$, where $$\Lambda=\left(
 \begin{array}{cc}   tb(L_1)+1 & l \\  l & tb(L_2)+1
 \end{array}
\right) $$
is the topological surgery framing matrix.
Let $W$ be the cobordism from $S^{3}$ to $S^{3}_{\Lambda}(L) $ induced by this surgery, and $-W$ be $W$ with reversed orientation. This gives a map $$F_{-W}:\HF(S^3)\rightarrow \HF(S^{3}_{-\Lambda}(\overline{L})).$$
In particular,
the contact invariants $$c(S^3, \xi_{std})\in \HF(-S^3)=\HF(S^3)$$ and  $$c(S^{3}_{\Lambda}(L), \xi_L)\in \HF(-S^{3}_{\Lambda}(L))=\HF(S^{3}_{-\Lambda}(\overline{L}))$$ are related by
\begin{equation} \label{ContactInvMap}
F_{-W}(c(S^3, \xi_{std}))=c(S^{3}_{\Lambda}(L), \xi_L).
\end{equation}
from the functoriality (\ref{functorial}) and the composition law \cite[Theorem 3.4]{OSzFour}.

\medskip
In \cite{g}, Golla investigated the contact invariant of a contact manifold given by contact surgery along a Legendrian knot in $(S^3,\xi_{std})$. In particular, by
\cite[Theorem 1.1]{g}, the contact 3-manifold obtained by contact $(+1)$-surgery along the Legendrian knot $L_i$ ($i=1,2$) in $(S^3, \xi_{std})$ has nonvanishing contact invariant if and only if $L_{i}$ satisfies the following three conditions:
\begin{equation}\label{Golla1}
tb(L_i)=2\tau(L_i)-1,
\end{equation}
\begin{equation} \label{Golla2}
rot(L_i)=0,
\end{equation}
\begin{equation}\label{Golla3}
\tau(L_i)=\nu(L_i).
\end{equation}

Hence, if either $L_1$ or $L_2$ does not satisfy one of these three conditions, then it follows readily from the functoriality (\ref{functorial}) that
the contact invariant $c(S^3_{\Lambda}(L),\xi_L)$ must vanish as well.

\begin{remark}\label{maxtb}
There exists a two-component link such that the knot type of each component has a Legendrian representative satisfying  the above three conditions, but the link type of this
two-component link has no
Legendrian representative with both two components satisfying the above three conditions simultaneously \cite[Section 5.6]{e}.
\end{remark}

\section{Link surgery formula for two-component links}

In this section, we recall the link surgery formula for two-component links developed in \cite{mo}.  The link surgery formula given by Manolescu and Ozsv\'ath is a generalization of the knot surgery formula given by Ozsv\'ath and Szab\'o \cite{OSzSurgery}\cite{OSzRatSurg}.  The idea is to compute the Heegaard Floer homology of a 3-manifold obtained by surgery along a knot in terms of a mapping cone construction, and more generally surgery along a link in terms of the hyperbox of the link surgery complex.  In addition, the cobordism map $F_{-W}$ is realized as the induced map of the inclusion of complexes in the system of hyperboxes \cite[Theorem 14.3]{mo}.

We go over the construction for two-component links. Suppose $L$ is an oriented link with two components $K_1$ and $K_2$, and the linking number of $K_1$ and $K_2$ is $l$. Given the topological surgery framing matrix $$\Lambda=\left(
 \begin{array}{cc}   p_1 & l \\  l & p_2
 \end{array}
\right),$$ we will see the computation of $\HF(S^{3}_{\Lambda}(L))$.

%\begin{definition}
Let $\mathbb{H}(L)_{i}=\frac{l}{2}+\mathbb{Z}, \, i=1,2.$  Define the affine lattice $\mathbb{H}(L)$ over $H_{1}(S^{3}-L)\cong \mathbb{Z}^{2}$ by
$$\mathbb{H}(L)=\mathbb{H}(L)_{1}\oplus\mathbb{H}(L)_{2}.$$  Here we identify $H_1(S^3-L)$ with $\Z^2$ using the oriented meridians of the components as the generators.  The elements of $\mathbb{H}(L)$ correspond to Spin$^{c}$ structures on $S^3$ relative to $L$. %\end{definition}
Also, let $$\mathbb{H}(K_{i})=\mathbb{Z}; \;\; \mathbb{H}(\emptyset)=\{0\}.$$  The elements of  $\mathbb{H}(K_{i})$  correspond to Spin$^{c}$ structures on $S^3$ relative to $K_i$ for $i=1,2$, while $0\in \mathbb{H}(\emptyset)$ corresponds to the unique Spin$^{c}$ structure on $S^3$.
Furthermore, let $+K_i$ and $-K_i$ represent the component $K_i$ of $L$ with the induced and opposite orientation, respectively.
For $M=\epsilon_{1} K_1$ or $\epsilon_{2} K_2$, where $\epsilon_{i}$ is $+$ or $-$ for $i=1,2$,
define
\begin{align*}
\psi^{M}: \; \mathbb{H}(L) & \longrightarrow \mathbb{H}(L-M) \\
s=(s_{1}, s_{2}) & \longmapsto  s_{j}-\frac{lk(+K_{j}, M)}{2},
\end{align*}
where $+K_{j}=L-M$ denotes the component of $L$ other than $M$. %One caveat is that $\epsilon_{i}$ is independent of $K_{i}$ for $i=1,2$.
We also define
\begin{align*}
\psi^{ \epsilon_{1}K_1\cup \epsilon_{2}K_2} : \mathbb{H}(L) &\longrightarrow  \mathbb{H}(\emptyset) \\
 s &\longmapsto 0.
\end{align*}

Now we consider Spin$^{c}$ structures on $S^{3}_{\Lambda}(L)$.  Let $H(L,\Lambda)$ be the (possibly degenerate) sublattice of $\mathbb{Z}^2$ generated by the two columns $\Lambda_1$ and $\Lambda_2$ of $\Lambda$.  We denote the quotient of $s\in \mathbb{H}(L)$ in $\mathbb{H}(L)/H(L,\Lambda)$ by $[s]$.  For any $\mathfrak{u} \in \spin(S^{3}_{\Lambda}(L))$, there is a standard way of associating an element $[s] \in \mathbb{H}(L)/H(L,\Lambda)$.  This gives an identification of the set $\mathbb{H}(L)/H(L,\Lambda)$  with $\spin(S^{3}_{\Lambda}(L))$.

%which is a twisted gluing of the following four (considering all possible values of $\epsilon_1$ and $\epsilon_2$) squares of chain complexes:

For any $s\in\mathbb{H}(L)$, and any choice of $\epsilon_1, \epsilon_2\in \{\pm\}$, there is a square of chain complexes:

 $$\xymatrix{
  \hat{\mathfrak{A}}(\mathcal{H}^{K_1},\psi^{\epsilon_{2}K_2}(s))\ar[rr]^{\Phi_{\psi^{\epsilon_{2}K_2}(s)}^{\epsilon_{1}K_{1}}}&  & \hat{\mathfrak{A}}(\mathcal{H}^{\emptyset},\psi^{\epsilon_{1}K_1\cup\epsilon_{2}K_2}(s))\\
    \hat{\mathfrak{A}}(\mathcal{H}^{L},s) \ar[rr]_{\Phi_{s}^{\epsilon_{1}K_{1}}}\ar[u]^{\Phi_{s}^{\epsilon_{2}K_{2}}}\ar[urr]_{\Phi_{s}^{\epsilon_{1}K_1\cup\epsilon_{2}K_2}} & & \hat{\mathfrak{A}}(\mathcal{H}^{K_2},\psi^{\epsilon_{1}K_1}(s))\ar[u]_{\Phi_{\psi^{\epsilon_{1}K_1}(s)}^{\epsilon_{2}K_{2}}}
      }$$

%  $$\xymatrix{
%  \prod\limits_{s\in \mathbb{H}(L), %[s]=\mathfrak{u}}\hat{\mathfrak{A}}(\mathcal{H}^{K_1},\psi^{\epsilon_{2}K_2}(s))\ar[rr]^{\Phi_{\psi^{\epsilon_{2}K_2}(s)}^{\epsilon_{1}K_{1}}}&  & \prod\limits_{s\in \mathbb{H}(L), [s]=\mathfrak{u}}\hat{\mathfrak{A}}(\mathcal{H}^{\emptyset},\psi^{\epsilon_{1}K_1\cup\epsilon_{2}K_2}(s))\\
%    \prod\limits_{s\in \mathbb{H}(L), [s]=\mathfrak{u}}\hat{\mathfrak{A}}(\mathcal{H}^{L},s) \ar[rr]_{\Phi_{s}^{\epsilon_{1}K_{1}}}\ar[u]^{\Phi_{s}^{\epsilon_{2}K_{2}}}\ar[urr]_{\Phi_{s}^{\epsilon_{1}K_1\cup\epsilon_{2}K_2}} & & \prod\limits_{s\in \mathbb{H}(L), [s]=\mathfrak{u}}\hat{\mathfrak{A}}(\mathcal{H}^{K_2},\psi^{\epsilon_{1}K_1}(s))\ar[u]_{\Phi_{\psi^{\epsilon_{1}K_1}(s)}^{\epsilon_{2}K_{2}}}
 %     }$$

Here, $\hat{\mathfrak{A}}(\mathcal{H}^{L},s)$,  $\hat{\mathfrak{A}}(\mathcal{H}^{K_1},\psi^{\epsilon_{2}K_2}(s))$,  $ \hat{\mathfrak{A}}(\mathcal{H}^{K_2},\psi^{\epsilon_{1}K_1}(s))$, and $\hat{\mathfrak{A}}(\mathcal{H}^{\emptyset},\psi^{\epsilon_{1}K_1\cup \epsilon_{2}K_2}(s))$ are generalized Floer complexes that can be determined from a given Heegaard diagram of the link $L$. The edge maps $\Phi$'s between these generalized Floer complexes are defined by counting holomorphic polygons with certain properties in the Heegaard diagram. The diagonal map $\Phi_{s}^{\epsilon_{1}K_1\cup\epsilon_{2}K_2}$ is a chain homotopy equivalence between  $\Phi_{\psi^{\epsilon_{1}K_1}(s)}^{\epsilon_{2}K_{2}}\circ\Phi_{s}^{\epsilon_{1}K_{1}}$ and $\Phi_{\psi^{\epsilon_{2}K_2}(s)}^{\epsilon_{1}K_{1}}\circ \Phi_{s}^{\epsilon_{2}K_{2}}$.
This is a unified expression of the four squares in \cite[Page 20]{l1}.

Following the notations of \cite[Sections 4]{mo}, we denote
$$C_s^{00}=\hat{\mathfrak{A}}(\mathcal{H}^{L},s), \;\;   C_s^{10}=\hat{\mathfrak{A}}(\mathcal{H}^{K_2},\psi^{+K_1}(s)),$$
$$C_s^{01}=\hat{\mathfrak{A}}(\mathcal{H}^{K_1},\psi^{+K_2}(s)), \;\; C_s^{11}=\hat{\mathfrak{A}}(\mathcal{H}^{\emptyset},\psi^{+K_1\cup +K_2}(s)). $$
%the generalized complexes $\hat{\mathfrak{A}}(\mathcal{H}^{L},s)$, $\hat{\mathfrak{A}}(\mathcal{H}^{K_1},\psi^{+K_2}(s))$, $\hat{\mathfrak{A}}(\mathcal{H}^{K_2},\psi^{+K_1}(s))$ and $\hat{\mathfrak{A}}(\mathcal{H}^{\emptyset},\psi^{+K_1\cup +K_2}(s))$ by $C_s^{00}$, $C_s^{01}$, $C_s^{10}$ and $C_s^{11}$, respectively.
Fix $\mathfrak{u} \in \spin(S^{3}_{\Lambda}(L))$. The \emph{link surgery formula} for two-component links is a hyperbox of complexes $(\hat{\mathcal{C}}, \hat{\mathcal{D}}, \mathfrak{u})$ shown as follow:

$$\xymatrix{
  \prod\limits_{s\in \mathbb{H}(L), [s]=\mathfrak{u}}C_s^{01}\ar[rr]&
  & \prod\limits_{s\in \mathbb{H}(L), [s]=\mathfrak{u}}C_s^{11}\\
    \prod\limits_{s\in \mathbb{H}(L), [s]=\mathfrak{u}}C_s^{00} \ar[rr]\ar[u]
    \ar[urr]
    & & \prod\limits_{s\in \mathbb{H}(L), [s]=\mathfrak{u}}C_s^{10}\ar[u]  }$$

%$$\xymatrix{
%  \prod\limits_{s\in \mathbb{H}(L), [s]=\mathfrak{u}}\hat{\mathfrak{A}}(\mathcal{H}^{K_1},\psi^{+K_2}(s))\ar[rr]&
%  & \prod\limits_{s\in \mathbb{H}(L), [s]=\mathfrak{u}}\hat{\mathfrak{A}}(\mathcal{H}^{\emptyset},\psi^{+K_1\cup +K_2}(s))\\
%    \prod\limits_{s\in \mathbb{H}(L), [s]=\mathfrak{u}}\hat{\mathfrak{A}}(\mathcal{H}^{L},s) \ar[rr]\ar[u]
%    \ar[urr]
%    & & \prod\limits_{s\in \mathbb{H}(L), [s]=\mathfrak{u}}\hat{\mathfrak{A}}(\mathcal{H}^{K_2},\psi^{+K_1}(s))\ar[u]
%      }$$

%where $\Phi^{\circ} = \prod _{s \in \mathbb{H}(L), [s]=\mathfrak{u}} \Phi_s^\circ $ with $\circ = \pm K_1, \pm K_2, \pm K_1 \cup \pm K_2$.
Here, the horizontal arrows consist of maps of the form $\Phi_s^{\epsilon_1 K_1}$ or $\Phi_{\psi^{\epsilon_{2}K_2}(s)}^{\epsilon_{1}K_{1}}$, the vertical arrows of maps $\Phi_s^{\epsilon_2 K_2}$ or $\Phi_{\psi^{\epsilon_{1}K_1}(s)}^{\epsilon_{2}K_{2}}$, and the diagonal of maps $\Phi_s^{\epsilon_1 K_1\cup \epsilon_2 K_2}$.  The value of $s$ in the targets of each map are shifted by an amount depending on the type of map and the framing $\Lambda$: whenever we have a negatively oriented component $-K_i$ in the superscript of a map $\Phi_s$, we add the vector $\Lambda_i$ to $s$.  So, for example, the maps $\Phi_{s}^{-K_1}$ and $\Phi_{s}^{-K_2}$ shift $s$ by $(p_1, l)$ and $(l, p_2$), respectively, and $\Phi_{s}^{-K_1\cup -K_2}$ shifts $s$ by $(p_1+l, l+p_2)$.  We refer the reader to \cite[Sections 4, 5, 8 and 9]{mo}, \cite[Section 2]{li1}, or \cite[Section 4]{l1} for details.

In Figure~\ref{fig: E}, we exhibit a more concrete representation of $(\hat{\mathcal{C}}, \hat{\mathcal{D}}, \mathfrak{u})$ for which a square is drawn at the lattice point $s=(s_1, s_2)$ with $[s]=\mathfrak{u}$, and the complexes %$\hat{\mathfrak{A}}(\mathcal{H}^{L},s)$, $\hat{\mathfrak{A}}(\mathcal{H}^{K_1},\psi^{+K_2}(s))$, $\hat{\mathfrak{A}}(\mathcal{H}^{K_2},\psi^{+K_1}(s))$ and $\hat{\mathfrak{A}}(\mathcal{H}^{\emptyset},\psi^{+K_1\cup +K_2}(s))$
$C_s^{00}$, $C_s^{01}$, $C_s^{10}$ and $C_s^{11}$ are placed at the lower left, the upper left, the lower right and the upper right corner of the square, respectively.  Note that while $\Phi_{s}^{+K_{1}}$ and $\Phi_{s}^{+K_{2}}$ stay at the original lattice point, $\Phi_{s}^{-K_{1}}$ and $\Phi_{s}^{-K_{2}}$ map to the complexes at the lattice points $(s_1+p_1, s_2+l)$ and $(s_1+l, s_2+p_2$), respectively.

\begin{figure}[!ht]
\begin{overpic}%[grid,tics=10]
{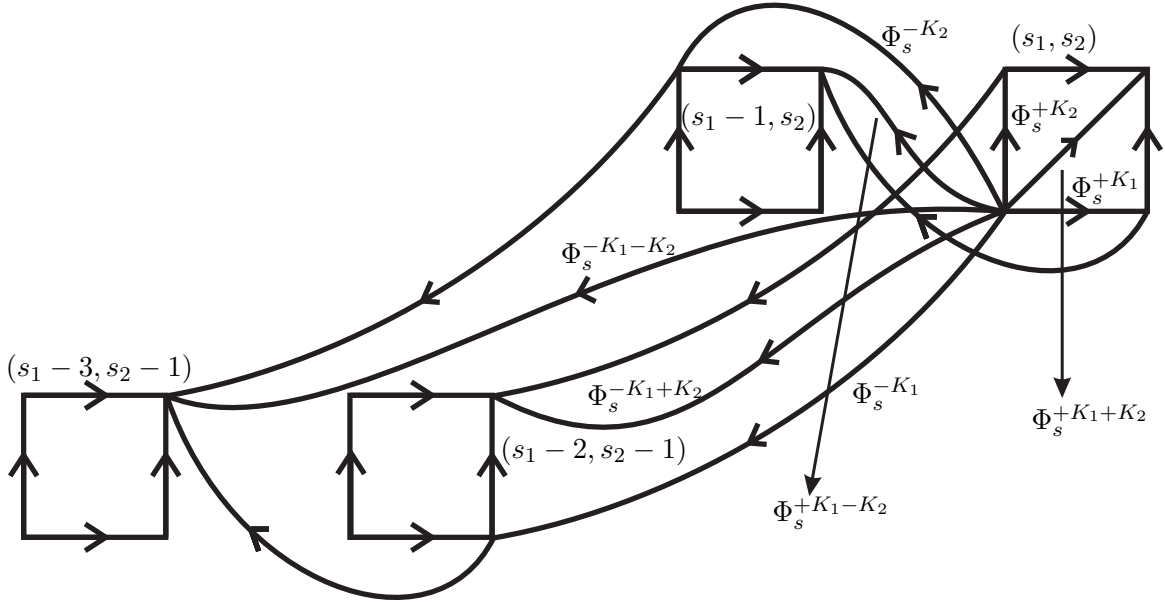}
\put(403, 153){\small{$\Phi_{s}^{+K_{1}}$}}
\put(331, 210){\small{$\Phi_{s}^{-K_{2}}$}}
\put(380, 180){\small{$\Phi_{s}^{+K_{2}}$}}
\put(320, 75){\small{$\Phi_{s}^{-K_{1}}$}}
\put(210, 128){\small{$\Phi_{s}^{-K_{1}-K_{2}}$}}
\put(220, 74){\small{$\Phi_{s}^{-K_{1}+K_{2}}$}}
\put(290, 30){\small{$\Phi_{s}^{+K_{1}-K_{2}}$}}
\put(388, 65){\small{$\Phi_{s}^{+K_{1}+K_{2}}$}}
\put(380, 210){\small{$(s_{1}, s_{2})$}}
\put(255, 180){\small{$(s_{1}-1, s_{2})$}}
\put(0, 85){\small{$(s_{1}-3, s_{2}-1)$}}
\put(187, 54){\small{$(s_{1}-2, s_{2}-1)$}}

\end{overpic}
\caption[]{Four squares of a hyperbox of chain complexes and maps between them.  The surgery framing matrix is $\left(\begin{array}{cc}   -2 & -1 \\  -1 & 0 \end{array}\right)$.}

\label{fig: E}
\end{figure}

It is often convenient to study $(\hat{\mathcal{C}}, \hat{\mathcal{D}}, \mathfrak{u})$ by introducing a filtration and consider the associated spectral sequence. Here, we define the filtration $\mathcal{F}(x)$ to be the number of components of $L'\subset L$ if $x\in \hat{\mathfrak{A}}(\mathcal{H}^{L'})$. Thus, the complex at the lower left corner of each square has filtration level 2; the complex at the lower right or the upper left corner of each square has filtration level 1; and the complex at the upper right corner of each square has filtration level 0.  Since the largest difference in the filtration levels is 2, the $k^{th}$ differential in the spectral sequence, $\partial_k$, mush vanish for $k>2$.  According to \cite[Section 3]{li1}, the associated spectral sequence has  $$E_{0}=(\hat{\mathcal{C}}, \partial_{0}),$$ $$E_{1}=(H_{\ast}(\hat{\mathcal{C}}, \partial_{0}), \partial_{1}),$$ and $$\HF(S^{3}_{\Lambda}(L), \mathfrak{u})=E_{\infty}=H_{\ast}(E_{2})=H_{\ast}(H_{\ast}(H_{\ast}(\hat{\mathcal{C}}, \partial_{0}), \partial_{1}), \partial_{2}).$$

Let us explain the $E_{1}$ page of the surgery chain complex in greater detail. Figure~\ref{fig: E1} exhibits a typical example of an $E_1$ page associated to a 2-dimensional hyperbox $(\hat{\mathcal{C}}, \hat{\mathcal{D}}, \mathfrak{u})$.
Observe that $\partial_{0}$ is the internal differential of each generalized Floer complex.  Hence we have $H_{\ast}(\hat{\mathfrak{A}}(\mathcal{H}^{L},s))$ at the lower left corner of the square at the lattice point $s=(s_1, s_2)$, which turns out to be isomorphic to $\HF$ of a large surgery along $L$ in a certain Spin$^{c}$ structure.  Similarly, $H_{\ast}(\hat{\mathfrak{A}}(\mathcal{H}^{K_1},\psi^{+K_2}(s)))$ at the upper left corner and $H_{\ast}(\hat{\mathfrak{A}}(\mathcal{H}^{K_2},\psi^{+K_1}(s)))$ at the lower right corner of each square are isomorphic to $\HF$ of large surgeries along $K_1$ and $K_2$ in certain Spin$^{c}$ structures, respectively; and  $H_{\ast}(\hat{\mathfrak{A}}(\mathcal{H}^{\emptyset},\psi^{+K_1\cup +K_2}(s)))$ at the upper right corner of each square is isomorphic to $\HF(S^{3})$.

\begin{figure}[htb]
\begin{overpic}%[grid,tics=10]
{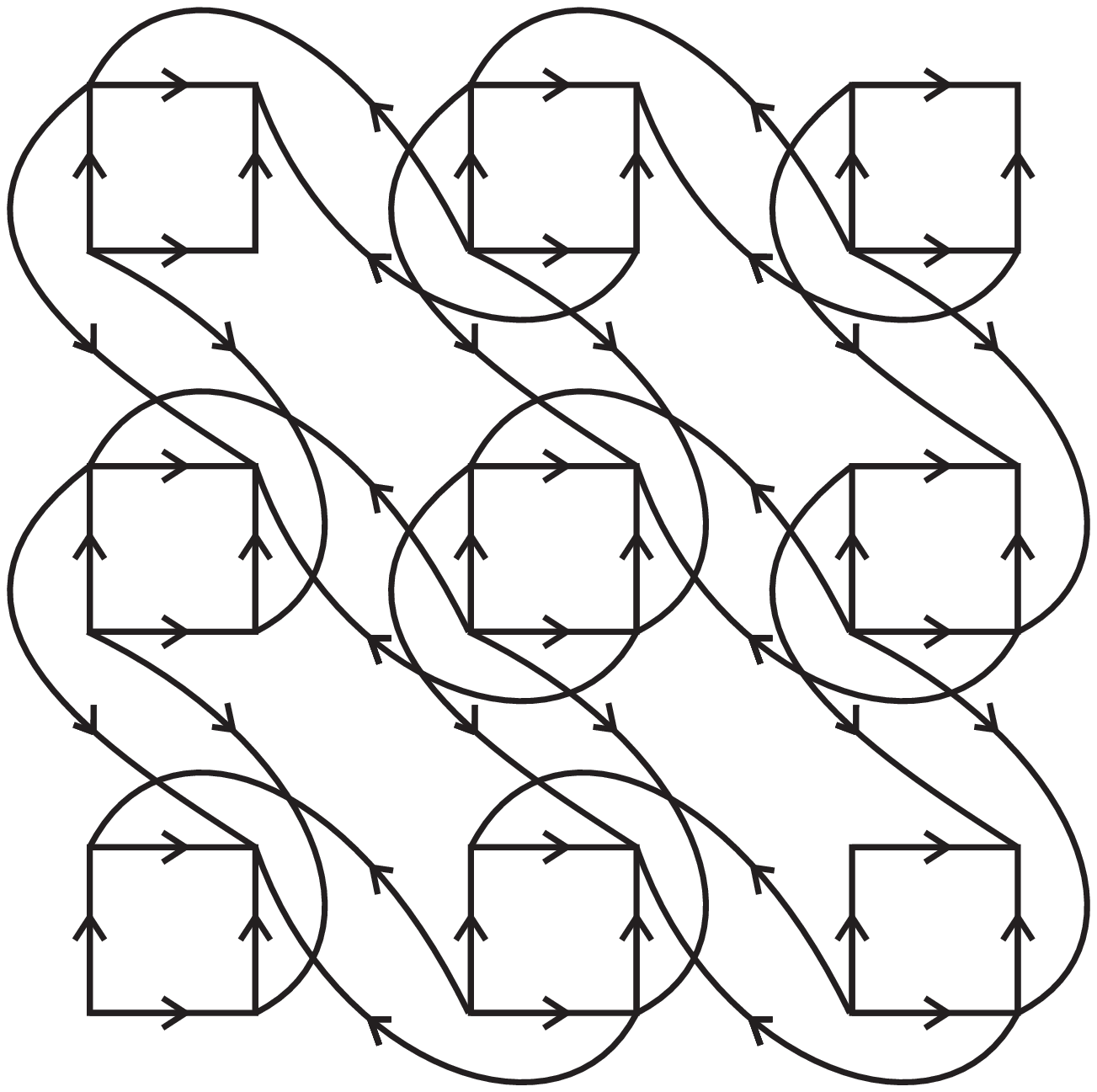}
\put(150, 212){$b^{1}$}
\put(219, 146){$b^{2}$}
\put(220, 212){$c$}
\put(161, 296){\small{$(s_1, s_2+1)$}}
\put(161, 163){\small{$(s_1, s_2)$}}
\put(161, 35){\small{$(s_1, s_2-1)$}}
\put(30, 163){\small{$(s_1-1, s_2)$}}
\put(292, 163){\small{$(s_1+1, s_2)$}}
\put(137, 109){\small{$\phi^{-K_1}_{\psi^{+K_2}(s)}$}}
\put(163, 199){\small{$\phi^{+K_1}_{\psi^{+K_2}(s)}$}}
\put(105, 136){\small{$\phi^{-K_2}_{\psi^{+K_1}(s)}$}}
\put(176, 178){\small{$\phi^{+K_2}_{\psi^{+K_1}(s)}$}}

\end{overpic}
\caption[]{Part of an $E_{1}$ page for the surgery framing matrix $\left(\begin{array}{cc}   0 & -1 \\  -1 & 0
 \end{array}\right)$ }

\label{fig: E1}
\end{figure}

Next, we consider the differential $\partial_1$. For $i=1,2$, let $\phi^{\epsilon_{i}K_i}_{s}$  be the homomorphism induced from $\Phi^{\epsilon_{i}K_i}_{s}$. Let $\phi^{\epsilon_{1}K_1}_{\psi^{+K_2}(s)}$ and $\phi^{\epsilon_{2}K_2}_{\psi^{+K_1}(s)}$ be the homomorphisms induced from $\Phi^{\epsilon_{1}K_1}_{\psi^{+K_2}(s)}$ and $\Phi^{\epsilon_{2}K_2}_{\psi^{+K_1}(s)}$, respectively.   Then $\partial_1$ consists of a collection of \emph{short edge maps} $\phi^{+K_1}$ and $\phi^{+K_2}$ that stay at the original lattice point, and another collection of \emph{long edge maps} $\phi^{-K_1}$ and $\phi^{-K_2}$ that shift the position by the vectors $(p_1, l)$ and $(l, p_2)$, respectively. The most relevant maps for our purposes are the ones that map into the homology $H_{\ast}(\hat{\mathfrak{A}}(\mathcal{H}^{\emptyset},\psi^{+K_1\cup +K_2}(s)))$ at the upper right corner of each square.  If we let $N$ be a sufficiently large integer, then under the above identification of $H_{\ast}(\hat{\mathcal{C}}, \partial_{0})$ with Heegaard Floer homology of large integer surgeries, we can identify the short edge map initiated from the upper left corner as $$\phi^{+K_1}_{\psi^{+K_2}(s)}:\HF(S^{3}_{N}(K_1), s_{1}-\frac{l}{2})\rightarrow\HF(S^{3}),$$
the long edge map initiated from the upper left corner as
$$\phi^{-K_1}_{\psi^{+K_2}(s)}:\HF(S^{3}_{N}(K_1), s_{1}-\frac{l}{2})\rightarrow\HF(S^{3}),$$

the short edge map initiated from the lower right corner as
$$\phi^{+K_2}_{\psi^{+K_1}(s)}:\HF(S^{3}_{N}(K_2), s_{2}-\frac{l}{2})\rightarrow\HF(S^{3}),$$
and the long edge map initiated from the lower right corner as
$$\phi^{-K_2}_{\psi^{+K_1}(s)}:\HF(S^{3}_{N}(K_2), s_{2}-\frac{l}{2})\rightarrow\HF(S^{3}),$$

Note that the targets $\HF(S^{3})$ of the first two maps are $H_{\ast}(\hat{\mathfrak{A}}(\mathcal{H}^{\emptyset},\psi^{+K_1\cup +K_2}(s)))$ and $H_{\ast}(\hat{\mathfrak{A}}(\mathcal{H}^{\emptyset},\psi^{-K_1\cup +K_2}(s)))$, respectively, and  the targets $\HF(S^{3})$ of the last two maps are $H_{\ast}(\hat{\mathfrak{A}}(\mathcal{H}^{\emptyset}, \psi^{+K_1\cup +K_2}(s)))$ and $H_{\ast}(\hat{\mathfrak{A}}(\mathcal{H}^{\emptyset},$ $\psi^{+K_1\cup -K_2}(s)))$, respectively.
According to \cite[Theorem 14.3]{mo}, the $p_i$-surgery on $S^3$ along $K_i$, $i=1,2$, corresponds to a 1-dimensional subcomplex in the 2-dimensional hyperbox $(\hat{\mathcal{C}}, \hat{\mathcal{D}}, \mathfrak{u})$.  Thus, by \cite[Remark 3.23]{mo}, the maps $\phi^{+K_1}_{\psi^{+K_2}(s)}$ and $\phi^{-K_1}_{\psi^{+K_2}(s)}$ are equivalent to the \emph{vertical} and \emph{horizontal} maps $\hat{v}_{K_1}$ and $\hat{h}_{K_1}$ defined in \cite{OSzSurgery}, respectively. The same thing holds for $\phi^{+K_2}_{\psi^{+K_1}(s)}$ and $\phi^{-K_2}_{\psi^{+K_1}(s)}$.
%Note that although the targets of all those maps are isomorphic to $\HF(S^{3})$, they indeed represent different copies of $H_{\ast}(\hat{\mathfrak{A}}(\mathcal{H}^{\emptyset},\psi^{K_1\cup K_2}(s)))$ that lie at different lattice points $s$.

\section{Vanishing contact invariants}

\begin{proof}[Proof of Theorem \ref{thm: main}]
 It follows from $\nu^+(L_2)=\nu^+(\overline{L_2})=0$ and (\ref{equivalence}) that $\tau (L_2)=0$. We claim that it suffices to consider the case where the Thurston-Bennequin invariant  $tb(L_2)=-1$.  Otherwise, $tb(L_2)$ must be strictly less than $-1$ by the inequality $tb(L_2)+|rot(L_2)|\le 2\tau(L_2)-1=-1$ (\cite[Theorem 1]{Olga}), thus violating the condition of (\ref{Golla1}).  This then implies the triviality of the contact invariant by our discussion near the end of Section 2.

We first treat the case where $L_2$ is a Legendrian unknot. We try to determine the contact invariant $c(S^{3}_{\Lambda}(L), \xi_L)\in \HF(-S^{3}_{\Lambda}(L))=\HF(S^3_{-\Lambda}(\overline{L}))$.  Note that $c(S^3, \xi_{std})$ is the unique generator of $\HF(S^3)\cong \F$. Hence  by (\ref{ContactInvMap}), $c(S^{3}_{\Lambda}(L), \xi_L)$ is the image of the generator under the cobordism map $F_{-W}:\HF(S^3)\rightarrow \HF(S^3_{-\Lambda}(\overline{L}))$, so $c(S^{3}_{\Lambda}(L), \xi_L)=0$ is equivalent to $F_{-W}$ being the zero map.

We resort to \cite[Theorem 14.3]{mo} to understand this map, which identifies $F_{-W}$ with the induced map of the inclusion $$\prod\limits_{s\in \mathbb{H}(\overline{L}), [s]=\mathfrak{u}}\hat{\mathfrak{A}}(\mathcal{H}^{\emptyset},\psi^{\overline{L_1}\cup \overline{L_2}}(s))\hookrightarrow (\hat{\mathcal{C}}, \hat{\mathcal{D}}, \mathfrak{u}).$$
In order to prove that $F_{-W}$ vanishes, it suffices to show that for each $s=(s_1, s_2)\in\mathbb{H}(\overline{L})$, $c_{s_1, s_2}$, the generator of $\HF(S^3)$ at the upper right corner of the square at the lattice point $(s_1, s_2)$,  is a boundary in the $E_1$ page of the spectral sequence (or equivalently, trivial in the $E_2$ page).

For the subsequent argument, we will still refer to Figure \ref{fig: E1} for a schematic picture of the $E_1$ page of the spectral sequence, although we should point out that at present the surgery is performed along the link $\overline{L}=\overline{L_1}\cup \overline{L_2}$, $\overline{L_1}$ and $\overline{L_2}$ correspond to $K_1$ and $K_2$ in Figure
\ref{fig: E1}, respectively, and the topological surgery framing matrix is $$-\Lambda=\left(
 \begin{array}{cc}   -(tb(L_1)+1) & -l \\  -l & 0
 \end{array}
\right).$$

As earlier, we use $N$ to denote a sufficiently large integer.  Since $\overline{L_2}$ is the unknot, the homology group that was identified with $\HF(S^{3}_{N}(\overline{L_2}), s_{2}+\frac{l}{2})$ at the lower right corner of the square at each lattice point $(s_1, s_2)$ is 1-dimensional.  We denote the generator of the homology group by $b^2_{s_1, s_2}$. Clearly, we have:

(1) If $s_{2}+\frac{l}{2}>0$, then $\phi^{+\overline{L_2}}_{\psi^{\overline{L_1}}(s)}$ is an isomorphism, and $\phi^{-\overline{L_2}}_{\psi^{\overline{L_1}}(s)}$  is the trivial map. So
\begin{equation*}\label{(1)}
\partial_{1} b^{2}_{s_{1}, s_{2}}=c_{s_{1}, s_{2}}.
\end{equation*}

(2) If $s_{2}+\frac{l}{2}<0$, then $\phi^{+\overline{L_2}}_{\psi^{\overline{L_1}}(s)}$ is the trivial map, and  $\phi^{-\overline{L_2}}_{\psi^{\overline{L_1}}(s)}$  is an isomorphism. So
\begin{equation*}\label{(2)}
\partial_{1} b^{2}_{s_{1}+l, s_{2}}=c_{s_{1}, s_{2}}.
\end{equation*}

(3) If $s_{2}+\frac{l}{2}=0$, then both $\phi^{+\overline{L_2}}_{\psi^{\overline{L_1}}(s)}$ and  $\phi^{-\overline{L_2}}_{\psi^{\overline{L_1}}(s)}$  are  isomorphisms. So
 \begin{equation*}\label{(3)}
 \partial_{1} b^{2}_{s_{1}, -\frac{l}{2}}=c_{s_{1}, -\frac{l}{2}}+c_{s_{1}-l, -\frac{l}{2}}.
\end{equation*}

On the other hand, we understand the general properties of the maps $\phi^{\pm\overline{L_1}}_{\psi^{\overline{L_2}}(s)}$ well when $s_1$ is sufficiently large:
In that case, the homology group $\HF(S^{3}_{N}(\overline{L_1}), s_{1}+\frac{l}{2})\cong \F$, and $\phi^{+\overline{L_1}}_{\psi^{\overline{L_2}}(s)}$ is an isomorphism while $\phi^{-\overline{L_1}}_{\psi^{\overline{L_2}}(s)}$ is the trivial map \cite{OSzSurgery}. Thus, if we denote the generator of the homology group at the upper left corner of the square at the lattice point $(s_1,-\frac{l}{2})$ by $b^{1}_{s_{1}, -\frac{l}{2}}$, then
\begin{equation}\label{(4)}
\partial_{1} b^{1}_{s_{1}, -\frac{l}{2}}=c_{s_{1}, -\frac{l}{2}}, \; \text{when} \; s_1 \gg 0.
\end{equation}

Let us put them together.  When $s_2\neq -\frac{l}{2}$, we can immediately see from Claims (1) and (2) that $c_{s_1, s_2}$ lies in the image of $\partial_1$. When $s_2=-\frac{l}{2}$, we can use Claim (3) and (\ref{(4)}) to find an explicit element $b$ such that $\partial_1(b)=c_{s_1, -\frac{l}{2}}$ under the assumption that the linking number $l$ is nonzero. More precisely, one can check that
$$\partial_1( b^{2}_{s_{1}+l, -\frac{l}{2}}+b^{2}_{s_{1}+2l, -\frac{l}{2}}+\cdots + b^{2}_{s_{1}+nl, -\frac{l}{2}}+b^1_{s_{1}+nl, -\frac{l}{2}})=c_{s_{1}, -\frac{l}{2}}$$
for $n$ large enough and $l>0$; and
$$\partial_1( b^{2}_{s_{1}, -\frac{l}{2}}+b^{2}_{s_{1}-l, -\frac{l}{2}}+\cdots + b^{2}_{s_{1}-nl, -\frac{l}{2}}+b^1_{s_{1}-(n+1)l, -\frac{l}{2}})=c_{s_{1}, -\frac{l}{2}}$$
for $n$ large enough and $l<0$.  In either case, this proves that  $c_{s_1, s_2}$ lies in the image of $\partial_1$ for each $s=(s_1, s_2)$, thus implying the theorem for the special case when $L_2$ is a Legendrian unknot.

More generally, since $L_2$ satisfies $\nu^+(L_2)=\nu^+(\overline{L_2})=0$, we apply (\ref{equivalence}) and conclude that $CFK^\infty(\overline{L_2})$ is filtered chain homotopy equivalent to $CFK^\infty (U) \oplus A$ for some acyclic complex $A$.  Then, the above argument for the unknot case nearly extends verbatim to the general case, except that the homology group $\HF(S^{3}_{N}(\overline{L_2}), s_{2}+\frac{l}{2})$  may not necessarily be 1-dimensional.  Nevertheless, we noticed that only the existence of the elements $b^2_{s_1, s_2}$ that satisfy Claims (1), (2) and (3) was really needed for the above proof. This can be attained in our case by taking the generators $b^2_{s_1, s_2}$ from the $CFK^\infty (U)$ summand in the filtered chain homotopy equivalent complex $CFK^\infty (U) \oplus A$.  The rest of the proof carries over for the general case.
\end{proof}

\begin{remark}
Indeed, based on a slightly more involved diagram-chasing-type argument like above, one can show that in general cases there exists $s=(s_1,s_2)\in\mathbb{H}(\overline{L})$ such that $c_{s_1, s_2}$ is nontrivial in the $E_2$ page if and only if $\tau(L_i)=\nu(L_i)$ and $\tau(\overline{L_i})=\nu(\overline{L_i})-1$ for $i=1,2$, under the assumption that the linking number is nonzero and $tb(L_i)=2\tau(L_i)-1$.  A better understanding of the higher differential $\partial_2$ in the $E_2$ page of the spectral sequence may lead to nonvanishing results of contact invariant.
\end{remark}

As a corollary, we show that contact $\frac{1}{n}$-surgery on $\sharp^{k}(S^{1}\times S^{2}, \xi_{t})$ along a homologically essential Legendrian knot $L$ yields a contact 3-manifold with vanishing contact invariant for any positive integer $n$, as claimed in Corollary \ref{cor}.

\begin{proof}[Proof of Corollary \ref{cor}]
The contact 3-manifold $\sharp^{k}(S^{1}\times S^{2},\xi_{t})$ can be obtained by contact $(+1)$-surgery on $(S^{3}, \xi_{std})$ along a Legendrian $k$-component unlink $L_{0}$. There exists a Legendrian knot $\tilde{L}$ in $(S^{3}, \xi_{std})$ which becomes the Legendrian knot $L$ in $\sharp^{k}(S^{1}\times S^{2},\xi_{t})$ after the contact $(+1)$-surgery along $L_0$. To find such an $\tilde{L}$, it suffices to perform Legendrian surgery on $\sharp^{k}(S^{1}\times S^{2},\xi_{t})$ along a Legendrian $k$-component link, each component of which lies in a  summand $(S^{1}\times S^{2},\xi_{t})$ and is disjoint from $L$, so that the result is $(S^{3}, \xi_{std})$. Then the image of $L$ in $(S^{3}, \xi_{std})$ is the desired $\tilde{L}$.

Note that contact $\frac{1}{n}$-surgery on $\sharp^{k}(S^{1}\times S^{2},\xi_{t})$ along  $L$ is equivalent to contact $(+1)$-surgery on  $\sharp^{k}(S^{1}\times S^{2},\xi_{t})$ along $n$ Legendrian push-offs of $L$ for any positive integer $n$.  Therefore, contact $\frac{1}{n}$-surgery on $\sharp^{k}(S^{1}\times S^{2},\xi_{t})$  along $L$ is equivalent to contact $(+1)$-surgery on $(S^{3}, \xi_{std})$ along a Legendrian $(k+n)$-component link $L'$, which is the union of the aforementioned Legendrian $k$-component unlink $L_{0}$ and $n$ Legendrian push-offs of the Legendrian knot $\tilde{L}$. See Figure~\ref{fig: lk3} for an example.

\begin{figure}[htb]
\begin{overpic}%[grid,tics=10]
{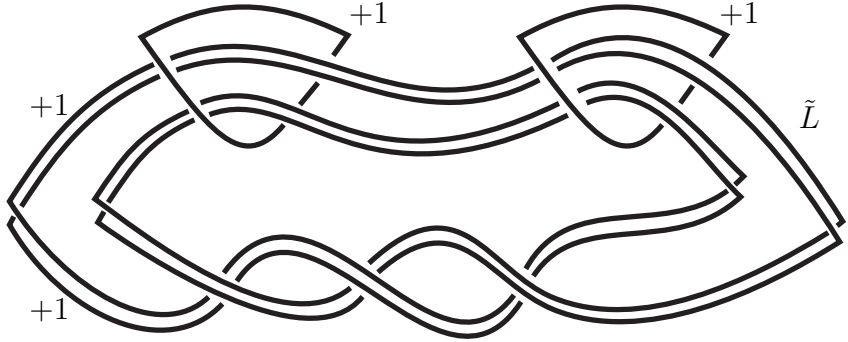}
\put(270, 120){$+1$}
\put(130, 120){$+1$}
\put(9, 85){$+1$}
\put(9, 8){$+1$}
\put(300, 80){$\tilde{L}$}
%\put(4, 60){$L_{1}$}

\end{overpic}
\caption{Contact $\frac{1}{2}$-surgery on $\sharp^{2}(S^{1}\times S^{2},\xi_{t})$ along a Legendrian knot $L$ is equivalent to contact $(+1)$-surgery on $(S^{3}, \xi_{std})$ along a Legendrian $4$-component link.}

\label{fig: lk3}
\end{figure}

If $L$ is not null-homologous in $\sharp^{k}(S^{1}\times S^{2})$, then the linking number of $\tilde{L}$ and one component of $L_0$ is nonzero. By Theorem~\ref{thm: main}, contact $(+1)$-surgery along the Legendrian two-component sublink of $L'$ formed by $\tilde{L}$ and that component of $L_0$ yields a contact 3-manifold with vanishing contact invariant.  Hence, it follows from (\ref{functorial}) that contact $(+1)$-surgery along the Legendrian $(k+n)$-component link $L'$ yields a contact 3-manifold with vanishing contact invariant as well.  This finishes the proof of the corollary.
\end{proof}

\section{Contact $(+1)$-surgeries along Legendrian Whitehead links}
\label{Whitehead}

In Figure~\ref{fig: wh1}, we draw a Legendrian two-component link in $(S^3,\xi_{std})$ with each component having Thurston-Bennequin invariant $-1$.  Denote the topological link types of this link and the mirror of it by $Wh$ and $\overline{Wh}$, respectively. A link in $S^3$ is called a \emph{Whitehead link} if it is of type $Wh$ or $\overline{Wh}$.

\begin{figure}[htb]
\begin{overpic}%[grid,tics=10]
{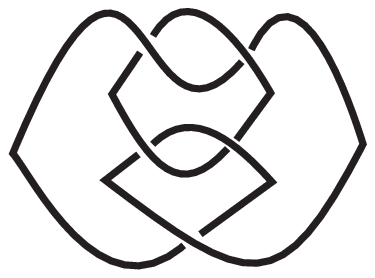}
\put(0, 65){$+1$}
\put(50, 80){$+1$}
%\put(36, 130){$L_{2}$}
%\put(4, 60){$L_{1}$}

\end{overpic}
\caption{A Legendrian Whitehead link with each component having Thurston-Bennequin invariant $-1$. The underlying topological type of this link is $Wh$.}

\label{fig: wh1}
\end{figure}

%\begin{example}\label{MirrorWh}
%We consider contact $(+1)$-surgery on $(S^3,\xi_{std})$ along  a Legendrian representative $L$ of $\overline{Wh}$.  According to \cite[Section 5.6]{e}, the sum of Thurston-Bennequin invariants of the two components of $L$ must be less than or equal to $-5$.  Consequently, the Thurston-Bennequin invariant of one of the components of $L$ must be strictly less than $-1$.By \"{O}zba\u{g}ci \cite[Theorem 3]{oz}, contact $(+1)$-surgery along a Legendrian unknot with Thurston-Bennequin invariant less than $-1$ yields an overtwisted contact 3-manifold.  Subsequently, the main result of Wand \cite{w} implies that contact $(+1)$-surgery along $L$ yields an overtwisted contact 3-manifold.
%\end{example}

\begin{proof}[Proof of Proposition~\ref{PropWhitehead}]

First, we consider contact $(+1)$-surgery on $(S^3,\xi_{std})$ along a Legendrian representative $L$ of type $\overline{Wh}$.  According to \cite[Section 5.6]{e}, the sum of Thurston-Bennequin invariants of the two components of $L$ does not exceed $-5$.  Consequently, the Thurston-Bennequin invariant of one of the components of $L$ must be strictly less than $-1$.
By \"{O}zba\u{g}ci \cite[Theorem 3]{oz}, contact $(+1)$-surgery along a Legendrian unknot with Thurston-Bennequin invariant less than $-1$ yields an overtwisted contact 3-manifold.  Subsequently, the main result of Wand \cite{w} implies that contact $(+1)$-surgery along $L$ yields an overtwisted contact 3-manifold,  and must have vanishing contact invariant.

Now let $L=L_1\cup L_2$ be a Legendrian Whitehead link of type $Wh$. We only need to consider the case where both $L_1$ and $L_2$ have Thurston-Bennequin invariant $-1$. The result of contact $(+1)$-surgery along $L$ is denoted by $(S^3_{\mathbf{0}}(L), \xi_L)$, where
$$\mathbf{0}=\left(
 \begin{array}{cc}   0 & 0 \\  0 & 0
 \end{array}
\right) $$
is the topological surgery framing matrix.
We need to show that the contact invariant  $c(S^3_{\mathbf{0}}(L), \xi_L)$ vanishes.

Note that $\overline{L}=\overline{L_1}\cup \overline{L_2}$ and $-S^3_{\mathbf{0}}(L)=S^{3}_{\mathbf{0}}(\overline{L})$. Contact $(+1)$-surgery along the Legendrian unknot $L_{2}$ yields the unique tight contact structure $\xi_{t}$ on $S^{1}\times S^{2}$.  The subsequent 2-handle addition along $L_1$ yields $S^3_{\mathbf{0}}(L)$, and induces a homomorphism $$F_{1}: \HF(-S^1\times S^2)\longrightarrow \HF(-S^3_{\mathbf{0}}(L))$$ that sends the nontrivial contact invariant $c(S^1\times S^2,\xi_t)$ to $c(S^3_{\mathbf{0}}(L), \xi_L)$.

Recall that $$\HF(-S^1\times S^2)\cong\HF(S^{1}\times S^{2})\cong\mathbb{F}_{(-\frac{1}{2})}\oplus\mathbb{F}_{(\frac{1}{2})},$$ where the subscripts denote the absolute gradings,
and the contact invariant $c(S^1\times S^2,\xi_t)$ is supported in degree $-\frac{1}{2}$.
According to  \cite[Proposition 6.9]{l1}, $$\HF(S^3_{\mathbf{0}}(L))=\HF(S^{3}_{\mathbf{0}}(L), (0,0))\cong \mathbb{F}_{(0)}\oplus\mathbb{F}_{(0)}\oplus\mathbb{F}_{(-1)}\oplus\mathbb{F}_{(-1)},$$ where $(0,0)$ denotes the torsion Spin$^{c}$ structure on $S^3_{\mathbf{0}}(L)$.  So $$\HF(-S^3_{\mathbf{0}}(L))\cong \mathbb{F}_{(0)}\oplus\mathbb{F}_{(0)}\oplus\mathbb{F}_{(1)}\oplus\mathbb{F}_{(1)}.$$
Since $L_1$ is null-homologous in $S^1\times S^2$,  the homomorphism $F_1$ shifts the absolute degree by $-\frac{1}{2}$ \cite[Lemma 3.1]{OSzAbGr}. As $c(S^3_{\mathbf{0}}(L), \xi_L)=F_1(c(S^1\times S^2,\xi_t))$ and there are no nonzero elements in $\HF(-S^3_{\mathbf{0}}(L))$ supported in grading $-1$, we conclude that $c(S^3_{\mathbf{0}}(L), \xi_L)=0$.  This finishes the proof.
\end{proof}

\begin{remark}
Indeed, Theorem~\ref{overtwisted} implies that contact $(+1)$-surgery along the Legendrian Whitehead link shown in Figure~\ref{fig: wh1} yields an overtwisted contact 3-manifold. On the other hand, it is still unknown to date whether all Legendrian Whitehead links with each component having Thurston-Bennequin invariant $-1$ are Legendrian isotopic or not.  Hence, the obvious argument cannot be applied here to conclude that any contact 3-manifold obtained by contact $(+1)$-surgery along  a Legendrian Whitehead link is overtwisted.
\end{remark}

\section{Contact $(+1)$-surgeries yielding overtwisted contact 3-manifolds}
\label{Overtwisted}

\begin{proof}[Proof of Theorem \ref{overtwisted}]

We prove Theorem \ref{overtwisted} only in the case that the front projection contains the  configuration in Figure~\ref{fig: ot3}(a).  The same proof works for all other cases.

%Without loss of generality, we prove Theorem \ref{overtwisted} only in the case that the front projection contains the  configuration in Figure~\ref{fig: ot3}(a).

We construct a Legendrian knot $L'$ in $(S^{3}, \xi_{std})$ such that it can be divided into four segments $L'_1$, $L'_2$, $L'_3$ and $L'_4$. Two segments, $L'_3$ and $L'_4$, are contained in the dashed box in Figure~\ref{fig: ot2}.
For the other two segments, $L'_1$ is the downward Legendrian push-off of the part of $L_1$ outside the dashed box, and $L'_2$ is the upward Legendrian push-off of the part of $L_2$ outside the dashed box.

\begin{figure}[htb]
\begin{overpic}%[grid,tics=10]
{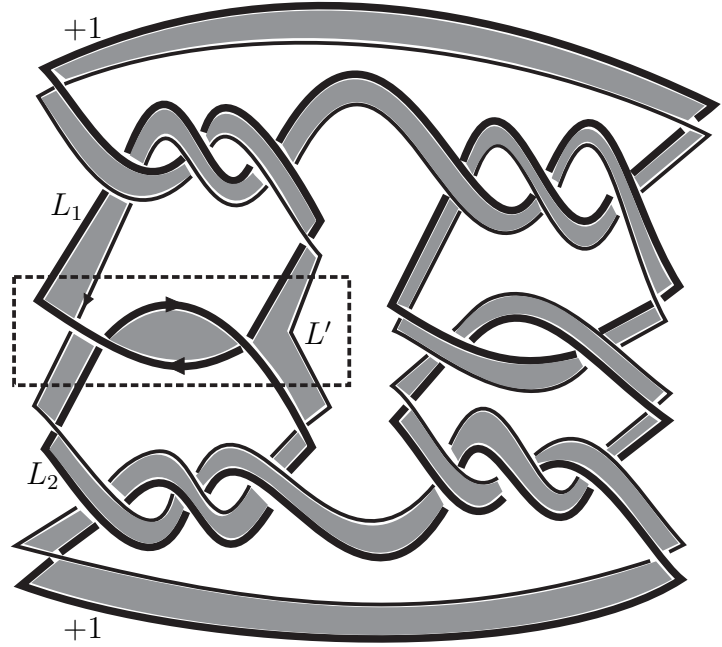}

\put(6, 60){$L_{2}$}
\put(15, 162){$L_{1}$}
\put(111, 113){$L'$}
\put(20, 229){$+1$}
\put(20, 2){$+1$}

\end{overpic}
\caption{An example of a contact $(+1)$-surgery diagram which satisfies the assumption of Theorem~\ref{overtwisted}.  The shaded area is a thrice-punctured sphere. The thin knot is $L'$. The two segments of $L'$ contained in the dashed box, $L'_3$ and $L'_4$, have no cusps and one cusp, respectively.}
\label{fig: ot2}
\end{figure}

There is a thrice-punctured sphere $S$ whose boundary consists of $L_1$, $L_2$ and $L'$.  See Figure~\ref{fig: ot2}. We orient $L_1$, $L_2$ and $L'$ as the boundary of $S$. Part of $S$ is contained in the dashed box in Figure~\ref{fig: ot2}. The part of $S$ outside the dashed box consists of two bands.  For brevity, we call the part of a knot in (resp. outside) the dashed box the inside part (resp. outside part) of the knot.

We compute the Thurston-Bennequin invariant of $L'$.

\begin{lemma} \label{ContactFraming} $tb(L')=tb(L_1)+tb(L_2)+2(l+1)$, where $l$ is the linking number $lk(L_1,L_2)$ of $L_1$ and $L_2$.
\end{lemma}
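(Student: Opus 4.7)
My plan is to evaluate $tb(L')$ directly from the front via the standard identity $tb(K) = w(K) - \tfrac12 c(K)$, where $w$ is the writhe and $c$ the number of cusps of a generic front projection, and to compare with $tb(L_1) + tb(L_2)$ piece by piece. Decompose each of $L_1$, $L_2$, and $L'$ into its \emph{outside} and \emph{inside} portions relative to the dashed box of Figure~\ref{fig: ot2}. Because $L_1'$ and $L_2'$ are Legendrian push-offs of the outside portions of $L_1$ and $L_2$, their cusps and self-crossings are in bijection with those of the originals, and crossings between $L_1'$ and $L_2'$ outside the box are in bijection (with matching signs) with the crossings between $L_1^{\mathrm{out}}$ and $L_2^{\mathrm{out}}$ inside $L_1\cup L_2$; note that the count is not multiplied by four, because $L_1$ and $L_2$ themselves are not part of $L'$. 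The inside arcs $L_3'$ and $L_4'$ contribute $0$ and $1$ cusps, respectively.

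Using the identity $2l = \sum_{\mathrm{all}\ L_1\text{-}L_2\ \mathrm{crossings}}\epsilon$ to split the $L_1$-$L_2$ crossings into outside and inside parts, the outside-of-box contributions to $tb(L') - tb(L_1) - tb(L_2)$ telescope, leaving exactly $2l$ (the cross-linking term) after cancellation with the matching parts of $w(L_1)+w(L_2)$ and $c(L_1)+c(L_2)$. What remains is a purely local computation supported inside the dashed box: the writhe and cusps of $L_3'\cup L_4'$, the crossings of $L_3'\cup L_4'$ with the box tails of $L_1',L_2'$, and the writhe, cusps, and mutual crossings of $L_1^{\mathrm{in}}\cup L_2^{\mathrm{in}}$ that have been removed when passing from $L_1\cup L_2$ to $L'$. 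Reading this data directly from configuration (a) of Figure~\ref{fig: ot3} and Figure~\ref{fig: ot2}, I would check that these local contributions total $+2$, giving the desired identity
\[
tb(L') - tb(L_1) - tb(L_2) \;=\; 2l + 2 \;=\; 2(l+1).
\]

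The main obstacle is precisely this box-local sign accounting. One must carefully enumerate the single cusp contributed by $L_4'$, the new crossings created by $L_3'$ and $L_4'$ against each other and against the tails of $L_1', L_2'$, and the signs of the $L_1^{\mathrm{in}}$-$L_2^{\mathrm{in}}$ crossings in the displayed configuration; the key consistency check is that the universal constant $+2$ appears regardless of how the outside portions of $L_1$ and $L_2$ are arranged, since the statement of the lemma has no hypothesis beyond the local picture in the box. Once this check is carried out, the claimed formula follows immediately from the $w$-$c$ identity.
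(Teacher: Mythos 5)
Your proposal is correct and follows essentially the same route as the paper: apply $tb=w-\tfrac12 c$ to the front, match the self-crossings and cusps of the push-offs $L_1',L_2'$ with those of $L_1\cup L_2$, and split the $L_1$--$L_2$ crossings into those inside and outside the dashed box. The paper carries out the local check you defer by noting that the two crossings inside the box contribute $-1$ to $l$, so the outside crossings of $L_1$ and $L_2$ (and hence the $L_1'$--$L_2'$ crossings) have signed count $2(l+1)$, while $c(L')=c(L_1)+c(L_2)$.
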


\begin{proof}
For the Legendrian knot $L'$, the Thurston-Bennequin invariant
\begin{equation}\label{tb}
tb(L')=w(L')-\frac{1}{2}c(L'),
\end{equation}
where $w(L')$ and $c(L')$ denote the writhe and the number of cusps of (the front projection of) $L'$, respectively.  Self-crossings of $L'$ consists of self-crossings of $L_1'$, self-crossings of $L_2'$ and crossings of $L_1'$ and $L_2'$. For $i=1,2$, self-crossings of $L'_i$ contribute $w(L_i)$ to $w(L')$. The crossings of $L'_1$ and $L'_2$ contribute $2(l+1)$ to $w(L')$. This can be seen as follows. The two crossings of $L_1$ and $L_2$ inside the dashed box contribute $-1$ to $lk(L_1, L_2)$.  So the crossings of $L_1$ and $L_2$ outside the dashed box contribute $l+1$ to $lk(L_1, L_2)$. Recall that $L'_{i}$ is a Legendrian push-off of the outside part of $L_{i}$ for $i=1,2$. Each crossing of $L'_1$ and $L'_2$ is induced by a crossing of the outside parts of $L_1$ and $L_2$. See Figure~\ref{fig: writhe} for all  possible configurations of $L'$ near a crossing of the outside parts of $L_{1}$ and $L_{2}$.  A crossing of the outside parts of $L_1$ and $L_2$ and the nearby crossing  of $L'_1$ and $L'_2$  have the same sign. So the number of crossings of $L'_1$ and $L'_2$, counted with sign, equals that of the outside parts of  $L_1$ and $L_2$, counted with sign, which is $2(l+1)$. Hence we have
\begin{equation}\label{wr}
w(L')=w(L_1)+w(L_2)+2(l+1).
\end{equation}
As $L'$ and $L_{1}\cup L_{2}$ have the same number of cusps,
\begin{equation}\label{cusp}
c(L')=c(L_1)+c(L_2).
\end{equation}
The lemma follows from  (\ref{tb}), (\ref{wr}) and (\ref{cusp}).
\end{proof}

\begin{figure}[htb]
\begin{overpic}%[grid,tics=10]
{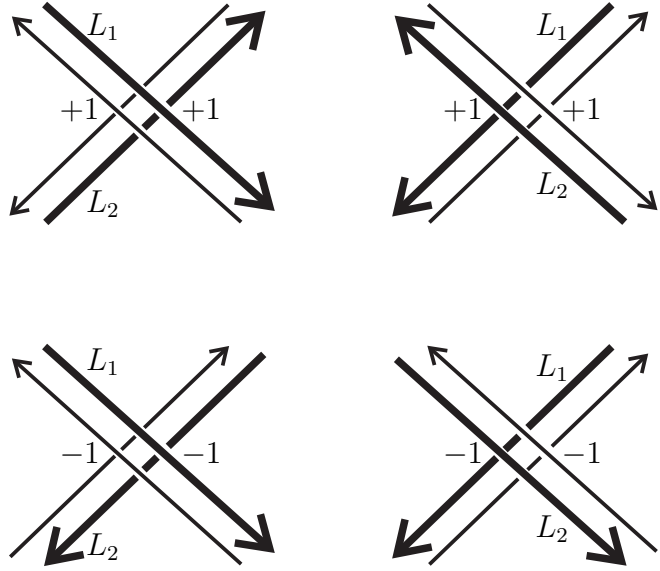}

\put(30, 135){$L_{2}$}
\put(30, 202){$L_{1}$}
\put(200, 202){$L_{1}$}
\put(200, 142){$L_{2}$}
\put(30, 75){$L_{1}$}
\put(30, 5){$L_{2}$}
\put(200, 73){$L_{1}$}
\put(200, 12){$L_{2}$}
\put(66, 40){$-1$}
\put(20, 40){$-1$}
\put(66, 170){$+1$}
\put(20, 170){$+1$}
\put(165, 170){$+1$}
\put(210, 170){$+1$}
\put(165, 40){$-1$}
\put(210, 40){$-1$}

\end{overpic}
\caption{Four possible configurations of $L'$ near a crossing of the outside parts of $L_{1}$ and $L_{2}$. The thin arcs are parts of $L'$.}
\label{fig: writhe}
\end{figure}

We compute the framings of $L_1$, $L_2$ and $L'$ induced by $S$.

\begin{lemma} \label{SurfaceFraming} (1) For $i=1,2$, the framing of $L_{i}$ induced by $S$ is $tb(L_i)+1$ with respect to the Seifert surface framing of $L_i$. \\ (2) The framing of $L'$  induced by $S$  is $tb(L_1)+tb(L_2)+2(l+1)$ with respect to the Seifert surface framing of $L'$; that is, the framing of $L'$ induced by $S$ coincides with the contact framing of $L'$.
\end{lemma}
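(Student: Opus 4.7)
The plan is to compute both framings via the standard technique: the framing induced on a boundary component $K$ of $S$ equals the linking number $lk(K, K^S)$, where $K^S$ is a parallel push-off of $K$ into the interior of $S$. I would draw the explicit push-offs $L_i^S$ and $L'^S$ on the front projection and count their crossings with $L_i$ and $L'$ respectively.

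For part (1), the key observation is that outside the dashed box, $S$ consists of two bands, each being a Legendrian push-off strip between the outside part of $L_i$ and $L_i'$. Consequently, $L_i^S$ restricted to the outside part coincides with $L_i'$. Inside the dashed box, $L_i^S$ is an arc on $S$ parallel to the inside part of $L_i$ that closes up the push-off. The linking number $lk(L_i, L_i^S)$ then splits into an outside contribution and an inside contribution. The outside contribution records precisely the same crossings that the contact framing would produce on the outside part of $L_i$; the inside contribution can be read off directly from Figure~\ref{fig: ot2}, and a direct check of the configuration in Figure~\ref{fig: ot3}(a) shows that exactly one additional positive crossing appears beyond what a Legendrian push-off in the box would give. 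This yields $lk(L_i, L_i^S) = tb(L_i) + 1$.

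For part (2), I would run the same analysis for $L'$. Outside the box, the push-off $L'^S$ traces the outside parts of $L_1$ and $L_2$, so the outside contribution to $lk(L', L'^S)$ matches the outside contribution to $tb(L')$ computed in the proof of Lemma~\ref{ContactFraming}. Inside the box, the segments $L_3'$ and $L_4'$ are arranged so that their $S$-push-offs introduce no extra twist relative to the Legendrian push-off. Hence $lk(L', L'^S) = tb(L')$, which by Lemma~\ref{ContactFraming} is $tb(L_1) + tb(L_2) + 2(l+1)$. In other words, the framing of $L'$ induced by $S$ coincides with the contact framing of $L'$.

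The main obstacle is the careful local verification of the framing contributions inside the dashed box, specifically the $+1$ twist for $L_i$ in part (1) and the absence of any extra twist for $L'$ in part (2). This amounts to a case-by-case diagram chase for each of the four configurations in Figure~\ref{fig: ot3}; by the symmetry of the configurations, the calculation is essentially identical in all four cases, but must be executed with care to track signs correctly.
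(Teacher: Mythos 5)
Your proposal is correct and takes essentially the same route as the paper: both compute the induced framing as the linking number of each boundary component with its push-off into the interior of $S$, identify the outside-the-box contribution with that of a Legendrian push-off (recovering the $tb$ terms via Lemma~\ref{ContactFraming}), and verify the inside-the-box contribution by direct inspection ($+1$ for each $L_i$, $0$ for $L'$). There are no substantive differences.
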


\begin{proof}
(1) For $i=1,2$, the framing of $L_{i}$ induced by $S$, with respect to the Seifert surface framing of $L_i$, is the linking number of $L_{i}$ and its push-off in the interior of $S$.  Note that the push-off of the outside part of $L_i$ in the interior of $S$ is isotopic to a Legendrian push-off of the outside part of $L_{i}$.  So it is easy to know that the framing of $L_{i}$ induced by $S$ is $tb(L_i)+1$ with respect to the Seifert surface framing of $L_i$.

(2) The framing of $L'$ induced by $S$, with respect to the Seifert surface framing of $L'$, is the linking number $lk(L',L'_0)$, where $L_0'$ is a push-off of $L'$ in the interior of $S$. We compute $lk(L',L_0')$ as the number of crossings where $L_0'$ crosses under $L'$, counted with sign. A similar argument as in the proof of Lemma~\ref{ContactFraming}
shows that the outside parts of $L'$ and $L_0'$ contribute $tb(L_1)+tb(L_2)+2(l+1)$ to $lk(L',L_0')$. It is easy to see that the inside parts of $L'$ and $L_0'$ contribute $0$
to $lk(L',L_0')$. Hence the framing of $L'$ induced by $S$ is $tb(L_1)+tb(L_2)+2(l+1)$ with respect to the Seifert surface framing of $L'$. By Lemma~\ref{ContactFraming}, this
framing coincides with the contact framing of $L'$.
\end{proof}

By Lemma~\ref{SurfaceFraming} (1), after we perform contact $(+1)$-surgery along the Legendrian link $L$, $S$ caps off to a disk $D$ with boundary $L'$. According to Lemma~\ref{SurfaceFraming} (2), the contact framing of $L'$ equals the framing of $L'$ induced by the disk $D$. Hence $D$ is an overtwisted disk and the contact 3-manifold after contact $(+1)$-surgery is overtwisted.
\end{proof}

\begin{example} \label{Example4}
Contact $(+1)$-surgery along the Legendrian link in Figure~\ref{fig: l1} yields an overtwisted contact 3-manifold. This is because the dashed box in Figure~\ref{fig: lk5} contains the configuration in Figure~\ref{fig: ot3}(c).
\end{example}

\begin{figure}[htb]
\begin{overpic}%[grid,tics=10]
{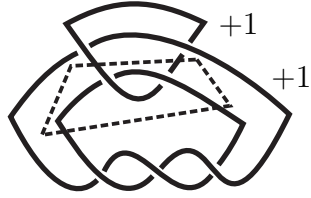}
\put(80, 60){$+1$}
\put(100, 40){$+1$}
%\put(40, 90){$+1$}
%\put(290, 110){$+1$}
%\put(25, 110){$L_{2}$}
%\put(0, 65){$L_{1}$}

\end{overpic}
\caption{A configuration in the dashed box. }
\label{fig: lk5}
\end{figure}

We can  transform  the four configurations in Figure~\ref{fig: ot3} to that in Figure~\ref{fig: ot4} through Legendrian Reidemeister moves. So we have the following corollary.

\begin{corollary}\label{overtwisted1}
Suppose there exists a front projection of  a Legendrian two-component link $L=L_1\cup L_2$ in the standard contact 3-sphere $(S^3, \xi_{std})$ that contains one of the configurations exhibited in Figure~\ref{fig: ot4}, then contact $(+1)$-surgery on $(S^3, \xi_{std})$ along $L$ yields an overtwisted contact 3-manifold.
\end{corollary}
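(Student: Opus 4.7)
The plan is to reduce Corollary~\ref{overtwisted1} directly to Theorem~\ref{overtwisted} by invoking the invariance of contact $(+1)$-surgery under Legendrian isotopy. The key observation is that if a Legendrian two-component link $L=L_1\cup L_2$ admits a front projection containing one of the configurations in Figure~\ref{fig: ot4}, then after applying a finite sequence of Legendrian Reidemeister moves localized inside (and just outside) that configuration, one obtains a Legendrian isotopic link $\widetilde{L}=\widetilde{L}_1\cup\widetilde{L}_2$ whose front projection contains one of the configurations in Figure~\ref{fig: ot3}. Since contact $(+1)$-surgery is a well-defined operation on Legendrian isotopy classes, the contact 3-manifolds obtained from $L$ and from $\widetilde{L}$ are contactomorphic, and Theorem~\ref{overtwisted} applied to $\widetilde{L}$ immediately yields overtwistedness of the surgered manifold.

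In more detail, the first step is to record which Legendrian Reidemeister moves are needed: for each of the four configurations in Figure~\ref{fig: ot4}, I would produce a short sequence of front-diagram moves (a combination of Legendrian Reidemeister moves of type II and III, involving sliding cusps past crossings and pulling an arc across another strand) that transforms it into exactly one of the configurations (a)--(d) of Figure~\ref{fig: ot3}. These moves can be performed locally within a small disk, so they do not affect the rest of the front projection of $L$. The second step is to invoke the fact, standard in Legendrian knot theory, that two Legendrian links are Legendrian isotopic in $(S^3,\xi_{std})$ if and only if their fronts are related by a finite sequence of Legendrian Reidemeister moves; hence the resulting link $\widetilde L$ is genuinely Legendrian isotopic to $L$.

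The third step is to note that contact $(+1)$-surgery along Legendrian isotopic links produces contactomorphic manifolds (this is a standard fact from \cite{dg}), so the contact 3-manifold obtained by contact $(+1)$-surgery along $L$ is contactomorphic to that obtained from $\widetilde{L}$. By Theorem~\ref{overtwisted}, the latter is overtwisted, and therefore so is the former.

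There is essentially no hard step: the only thing that requires care is the explicit verification, configuration by configuration, that a Reidemeister sequence really does take each local picture in Figure~\ref{fig: ot4} to one in Figure~\ref{fig: ot3}. I expect this to be a short case-check rather than a genuine obstacle, and the corollary then follows at once. The content of the corollary is thus that Theorem~\ref{overtwisted}, though stated for a rigid list of local pictures, is really an assertion about Legendrian isotopy classes, and Figure~\ref{fig: ot4} simply enlarges the list of diagrams in which the hypothesis is visibly satisfied.
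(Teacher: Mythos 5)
Your proposal is correct and follows essentially the same route as the paper: the paper's proof of Corollary~\ref{overtwisted1} consists precisely of transforming each configuration of Figure~\ref{fig: ot4} into the corresponding one of Figure~\ref{fig: ot3} by Legendrian Reidemeister moves (the explicit sequence for case (a) is displayed in Figure~\ref{fig: ot5}, with the other cases being similar) and then invoking Theorem~\ref{overtwisted}. The only content you defer --- the configuration-by-configuration move sequence --- is exactly what the paper supplies in that figure, so there is no gap in substance.
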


\begin{figure}[htb]
\begin{overpic}%[grid,tics=10]
{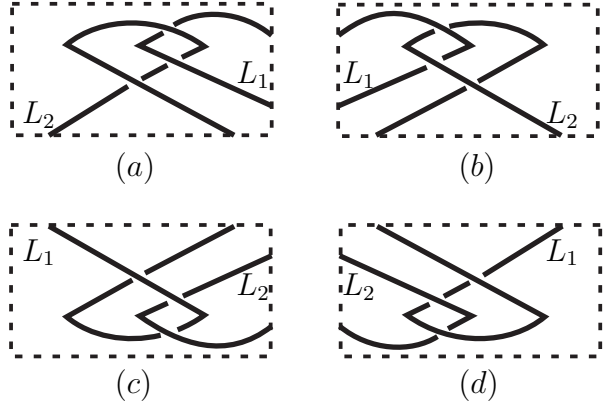}

\put(86, 105){$L_{1}$}
\put(5, 90){$L_{2}$}
\put(40, 70){$(a)$}

\put(126, 105){$L_{1}$}
\put(203, 90){$L_{2}$}
\put(170, 70){$(b)$}

\put(86, 25){$L_{2}$}
\put(5, 38){$L_{1}$}
\put(40, -12){$(c)$}

\put(126, 25){$L_{2}$}
\put(203, 38){$L_{1}$}
\put(170, -12){$(d)$}

\end{overpic}
\vspace{3mm}
\caption{Four configurations in a front projection of a Legendrian two-component link $L$. }
\label{fig: ot4}
\end{figure}

%Finally, we prove Corollary \ref{overtwisted1}.

\begin{proof}%[Proof of Corollary \ref{overtwisted1}]

We can transform the configuration in Figure~\ref{fig: ot3}(a) to the configuration in Figure~\ref{fig: ot4}(a) through Legendrian Reidemeister moves and an isotopy demonstrated in Figure~\ref{fig: ot5}. The other cases are similar.
 \begin{figure}[htb]
\begin{overpic}%[grid,tics=10]
{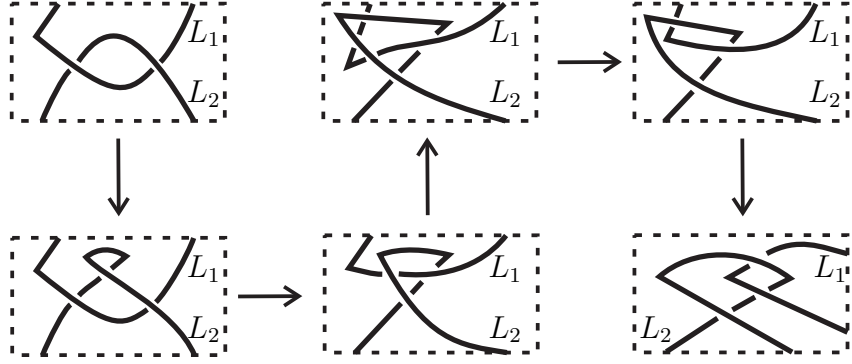}

\put(68, 120){$L_{1}$}
\put(68, 97){$L_{2}$}

\put(182, 120){$L_{1}$}
\put(182, 97){$L_{2}$}

\put(303, 120){$L_{1}$}
\put(303, 97){$L_{2}$}

\put(68, 31){$L_{1}$}
\put(68, 8){$L_{2}$}

\put(182, 31){$L_{1}$}
\put(182, 8){$L_{2}$}

\put(305, 31){$L_{1}$}
\put(239, 8){$L_{2}$}

\end{overpic}
\caption{A Legendrian isotopy. The first four arrows are Legendrian Reidemeister moves. The last arrow is an isotopy.}
\label{fig: ot5}
\end{figure}
\end{proof}

\begin{example} \label{Example5}
In Figure~\ref{fig: ot6}, $L_1'$ and $L_2'$ are parts of $L_1$ and $L_2$, respectively. We claim that contact $(+1)$-surgery along the Legendrian link $L_1\cup L_2$ in the left of Figure~\ref{fig: ot6} yields an overtwisted contact 3-manifold. This is because we can transform the Legendrian link in the left of Figure~\ref{fig: ot6} to that in the right of  Figure~\ref{fig: ot6} which contains the configuration in Figure~\ref{fig: ot4}(a) through Legendrian Reidemeister moves.

Consequently, contact $(+1)$-surgery along the Legendrian link in Figure~\ref{fig: l2} yields an overtwisted contact 3-manifold.
\end{example}

\begin{figure}[htb]
\begin{overpic}%[grid,tics=10]
{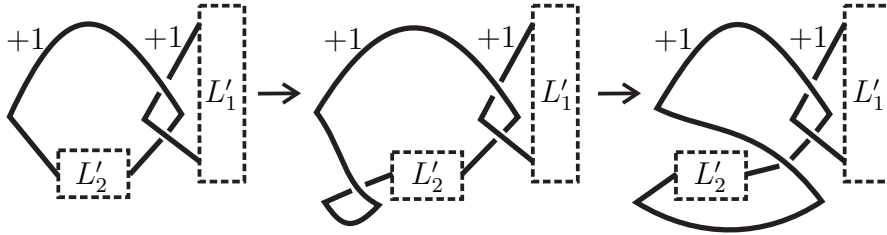}

\put(75, 50){$L_{1}'$}
\put(26, 19){$L_{2}'$}

\put(202, 50){$L_{1}'$}
\put(260, 19){$L_{2}'$}

\put(320, 50){$L_{1}'$}
\put(153, 19){$L_{2}'$}

\put(0, 70){$+1$}
\put(52, 71){$+1$}

\put(120, 70){$+1$}
\put(178, 71){$+1$}

\put(245, 70){$+1$}
\put(296, 71){$+1$}

\end{overpic}
\caption{An example of a contact $(+1)$-surgery yielding an overtwisted contact 3-manifold. The arrows are Legendrian Reidemeister moves. }
\label{fig: ot6}
\end{figure}

We conclude this section with an example whose tightness is still unclear.  It is interesting in the sense that they provide potential candidates for tight contact 3-manifolds with vanishing contact invariant obtained from $(+1)$-surgery along Legendrian links.

\begin{example}
In Figure~\ref{fig: xp3}, we consider contact $(+1)$-surgeries along the following Legendrian links: The first link consists of a Legendrian $\overline{9_{46}}$ with Thurston-Bennequin invariant $-1$ and rotation number $0$, and its Legendrian pushoff.  The second link is constructed by performing a Legendrian connected sum of the upper component of the first link with the Legendrian right handed trefoil with Thurston-Bennequin invariant $1$.

\begin{figure}[htb]
\begin{overpic}%[grid,tics=10]
{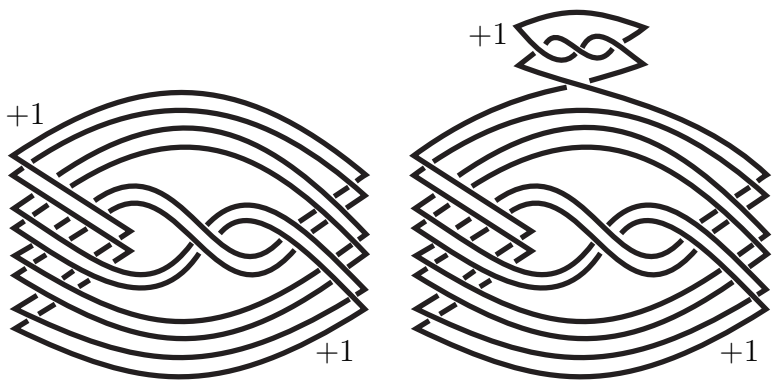}
\put(175, 127){$+1$}
\put(270, 7){$+1$}
\put(0, 97){$+1$}
\put(117, 7){$+1$}
%\put(25, 110){$L_{2}$}
%\put(0, 65){$L_{1}$}

\end{overpic}
\caption{The first link consists of a Legendrian $\overline{9_{46}}$ with Thurston-Bennequin invariant $-1$ and rotation number $0$ and its Legendrian pushoff; the second link is obtained from the first link by a Legendrian connected sum with a Legendrian right handed trefoil. }
\label{fig: xp3}
\end{figure}

Although contact $(+1)$-surgery along each knot component of the depicted links has nonvanishing contact invariant, contact $(+1)$-surgeries along both links result in contact 3-manifolds with vanishing contact invariants.  This follows readily from Theorem \ref{thm: main}.  In fact, contact $(+1)$-surgery along the first link is contactomorphic to contact $\frac{1}{2}$-surgery along $\overline{9_{46}}$, which is also known to have vanishing contact invariant from Mark-Tosun \cite[Theorem 1.2]{mt}.

On the other hand, we have not been able to determine whether the above contact 3-manifolds are overtwisted or not using the techniques in this section.

\end{example}


\begin{thebibliography}{10}

\bibitem{bo}K. Baker and S.  Onaran,   \emph{Nonlooseness of nonloose knots,} Algebr. Geom. Topol. 15 (2015), 1031-1066.

\bibitem{cns}  C. Cornwell,  L. Ng and S. Sivek,  \emph{Obstructions to Lagrangian concordance,} Algebr. Geom. Topol. 16 (2016), no. 2, 797-824.

\bibitem{dg} F. Ding and H. Geiges,  \emph{A Legendrian surgery presentation of contact 3-manifolds,} Math. Proc. Cambridge Philos. Soc. 136(2004), 583-598.

\bibitem{e0}  Y. Eliashberg,  \emph{Contact 3-manifolds twenty years since J. Martinet's work,}  Ann. Inst. Fourier (Grenoble) 42 (1992), no. 1-2, 165-192.

\bibitem{e} J. B. Etnyre,  \emph{Legendrian and transversal knots,} Handbook of knot theory, 105-185, Elsevier B. V., Amsterdam, 2005.

\bibitem{eh}J. B. Etnyre and K. Honda, \emph{On connected sums and Legendrian knots,} Adv. Math. 179 (2003), no. 1, 59-74.

\bibitem{ge} H. Geiges, \emph{An Introduction to Contact Topology,} Cambridge University Press, Cambridge, 2008.

\bibitem{gh} P. Ghiggini, \emph{ Ozsv\'{a}th-Szab\'{o} invariants and fillability of contact structures,} Math. Z. 253 (2006), no. 1, 159-175.

\bibitem{g} M. Golla,  \emph{Ozsv\'{a}th-Szab\'{o} invariants of contact surgeries,} Geom. Topol. 19 (2015), no. 1, 171-235.

\bibitem{h} J. Hom, \emph{A survey on Heegaard Floer homology and concordance}, J. Knot Theory Ramifications 26(2017),  no. 1740015,  24pp.

\bibitem{HL} J. Hom and T. Lidman, \emph{A note on positive definite, symplectic manifolds}, J.  Eur. Math. Soc. 21(2019), no.1, 257-270.

\bibitem{HW} J. Hom and Z. Wu, \emph{Four-ball genus bounds and a refinement of the Ozsv\'ath-Sz\'abo tau-invariant}, J. Symp. Geom. 14 (2016), 305-323.

\bibitem{KW} M. Kim and Z. Wu, \emph{On rational sliceness of Miyazaki's fibered, $-$ amphicheiral knots}, Bull. Lond. Math. Soc. 50(2018), no.3, 462-476.

\bibitem{li1} T. Lidman, \emph{Framed Floer homology}, arXiv: 1109.3756.



\bibitem{ls}  P. Lisca and A. I. Stipsicz,  \emph{Ozsv\'{a}th-Szab\'o invariants and tight contact three-manifolds, I,} Geom. Topol. 8 (2004), 925-945.

\bibitem{ls2}P. Lisca and A. I. Stipsicz,   \emph{Seifert fibered contact three-manifolds via surgery,} Algebr. Geom. Topol. 4 (2004), 199-217.

\bibitem{ls3}P. Lisca and A. I. Stipsicz, \emph{Notes on the contact Ozsv\'{a}th-Szab\'o invariants,} Pacific J. Math. 228 (2006), no. 2, 277-295.

\bibitem{l1} Y. Liu, \emph{Heegaard Floer homology of surgeries on two-bridge links}, arXiv:1402.5727v2.

\bibitem{mo} C. Manolescu and P. Ozsv\'{a}th,  \emph{Heegaard Floer homology and integer surgeries on links,} arXiv:1011.1317v4.

\bibitem{mt} T. E. Mark and B. Tosun, \emph{Naturality of Heegaard Floer invariants under positive rational contact surgery,} J. Differential Geom. 110(2018), no. 2, 281-344.

\bibitem{oz} B. \"{O}zba\u{g}ci, \emph{A note on contact surgery diagrams,} Internat. J. Math. 16 (2005), no. 1, 87-99.

\bibitem{ozst} B. \"{O}zba\u{g}ci and A. I. Stipsicz, {\em Surgery on contact 3-manifolds and Stein surfaces,} Bolyai Soc. Math. Stud., Vol. {\bf 13}, Springer, 2004.

\bibitem{OSzAbGr} P. Ozsv\'ath and Z. Szab\'o, {\it Absolutely graded Floer homologies and intersection forms for four-manifolds with boundary}, Adv. Math.  173  (2003),  no. 2, 179-261.

\bibitem{OSzKnot} P. Ozsv\'ath and Z. Szab\'o, {\it Holomorphic disks and knot invariants}, Adv. Math. 186 (2004), no. 1, 58-116.

\bibitem{OSzContact}P. Ozsv\'ath and Z. Szab\'o, {\it Heegaard Floer homology and contact structures}, Duke Math. J. 129(2005), 39-61.

\bibitem{OSzFour} P. Ozsv\'ath and Z. Szab\'o, {\it Holomorphic triangles and invariants for smooth four-manifolds},
Adv. Math. 202 (2006), no. 2, 326-400.

\bibitem{OSzSurgery} P. Ozsv\'ath and Z. Szab\'o, {\it Knot Floer homology and integer surgeries}, Algebr. Geom. Topol. 8 (2008), no. 1, 101-153.

\bibitem{OSzRatSurg} P. Ozsv\'ath and Z. Szab\'o, {\it Knot Floer homology and rational surgeries}, Algebr. Geom. Topol. 11 (2011), no.1, 1-68.

\bibitem{Olga} O. Plamenevskaya, {\it Bounds for the Thurston-Bennequin number from Floer homology}, Algebr. Geom. Topol. 4 (2004), 399-406.

\bibitem{Ras} J. Rasmussen, {\it Floer homology and knot complements}, Ph.D. thesis, Harvard University, 2003.

\bibitem{w}A. Wand,  \emph{Tightness is preserved by Legendrian surgery},  Ann. of Math. (2) 182 (2015), no. 2, 723-738.

\end{thebibliography}
\end{document}